\theoremstyle{plain}
\newtheorem{theorem}{Theorem}[section]
\newtheorem{lemma}[theorem]{Lemma}
\newtheorem{prop}[theorem]{Proposition}
\newtheorem{cor}[theorem]{Corollary}
\theoremstyle{definition}
\newtheorem{definition}[theorem]{Definition}
\newtheorem{remark}[theorem]{Remark}
\newtheorem{example}[theorem]{Example}
\newtheorem{notation}[theorem]{Notation}
\theoremstyle{remark}
\newcommand{\bbR}{\mathbb{R}}
\newcommand{\bbZ}{\mathbb{Z}}
\titleformat{\subsection}[runin]
       {\normalfont\bfseries}
       {\thesubsection}
       {0.5em}
       {}
       [.]
\numberwithin{equation}{section}
\title{Permutohedron's volume via Dyck paths}
\author{ Damian de la Fuente\thanks{LAMFA, Universit\'e de Picardie Jules Verne, Amiens, France} }
\date{ }
\begin{document}
\maketitle

\begin{abstract}
In a recent project, Castillo, Libedinsky, Plaza, and the author established a deep connection between the size of lower Bruhat intervals in affine Weyl groups and the volume of the permutohedron, showing that the former can be expressed as a linear combination of the latter.  

In this paper, we provide a formula for the volume of this polytope in terms of Dyck paths. Thus, we present a shorter, alternative, and enlightening proof of a previous formula given by Postnikov.
\end{abstract}

\section{Introduction}
In \cite{GeoFor2023} we derived a geometric formula giving the sizes of certain Bruhat intervals in affine Weyl groups in terms of convex geometry.
More precisely, to each dominant coweight $\lambda$ we associated a group element $\theta(\lambda)$ and a permutohedron $\mathsf{P}(\lambda)$ (or weight polytope outside type $A$), and we expressed the number of elements below $\theta(\lambda)$, in the strong Bruhat order, as a linear combination of the volumes of the faces of $\mathsf{P}(\lambda)$.
As a consequence, for a fixed affine Weyl group, a single polynomial encapsulates all the cardinalities of the lower intervals attached to those special elements.
Hence, in order to explicitly compute those cardinalities, a polynomial formula for the volume of the permutohedron is needed.
Motivated by this, we derived such a formula (for type $A$) in terms of Dyck paths.
This is the content of \Cref{thm: vol as dyck}.
It turned out that this result was already published by Postnikov \cite[Theorem 17.1]{postnikov2009permutohedra}, his version in terms of plane binary trees.
Our proof is shorter and relies solely on the classical \textit{pyramid formula} for convex polytopes, while also clarifying the obscure points in Postnikov's proof regarding the occurrence of the Catalan numbers.
Along the way, we also establish the polynomiality of the volume of $\mathsf{P}(\lambda)$ without relying on Brion's formula.

Let us illustrate our formula for the symmetric group $W=\mathfrak{S}_4$.
The group $W$ acts on $\bbR^4$ by permutation of coordinates.
The hyperplane $E$ of vectors whose coordinate sum is zero is $W$-stable.
For $\lambda\in E$, the permutohedron $\mathrm{P}_3(\lambda)\subset E$ is the convex polytope whose set of vertices is the orbit of $\lambda$ under $W$.
The index of $\mathrm{P}_3$ comes from the dimension of $E$.\footnote{In \cite{postnikov2009permutohedra}, Postnikov denotes this permutohedron by $P_{4}$.}
Consider the basis of $E$ composed of the following vectors
\begin{equation*}
\varpi_1=\tfrac{1}{4}(3,-1-1,-1), \qquad \varpi_2=\tfrac{1}{2}(1,1,-1,-1) \qquad \mbox{and} \qquad \varpi_3=\tfrac{1}{4}(1,1,1,-3).
\end{equation*}
We say that $x\in E$ is dominant if $x=x_1\varpi_1+x_2\varpi_2+x_3\varpi_3$ with $x_1,x_2,x_3\in\bbR_{\geq 0}$.
The volume (we are using the Euclidean volume, see \Cref{sec: vol conventions}) of the permutohedron $\mathcal{V}_3$ is the polynomial in  $\bbR[x_1,x_2,x_3]$ determined by
\begin{equation}
\mathcal{V}_3(x_1, x_2,x_3) = \mathrm{Vol}\left(\mathrm{P}_3(x_1\varpi_1+x_2\varpi_2+x_3\varpi_3)\right), \qquad \forall \, x_1,x_2,x_3\in\bbR_{\geq 0}.
\end{equation}
It suffices to consider only dominant vectors, as to any vector in $E$ it corresponds a unique dominant vector in its $W$-orbit.
We express $\mathcal{V}_3$ in terms of the following polynomials.
For non-negative integers $d,i,u$ with $1\leq i\leq d\leq 3$ and $u+d\leq 3$, let
\begin{equation}\label{eq: intro gamma pols}
    \Gamma_{d,i}[u](x_1,x_2,x_3)=\dfrac{1}{d}\binom{d+1}{i}\sum_{j=1}^dc_{d,i,j}\,x_{j+u},
\end{equation}
where $[-]$ denotes a shift in the variables and $c_{d,i,j}=\mathrm{min}\{i,j\}-\dfrac{ij}{d+1}$ (these coefficients are the entries of the inverse of the Cartan matrix of type $A_d$).

To each $3$-Dyck path $D$ (from $(0,0)$ to $(3,3)$), we associate a polynomial $\Gamma_D$.
First, we associate a polynomial $\Gamma_{d,i}[u]$ to each north step of $D$, and then we define $\Gamma_D$ as the product of the polynomials at each of its north steps.
In Figure \ref{fig: gammaN}, we have fixed a Dyck path $D$ and computed the polynomials of each north step in it.
Let $N\in D$ be a north step (the thick segment) and $P=(u,u')$ be its initial point.
Draw a segment along $y=x+u'-u$ starting from $P$, as long as it is contained in $D$ (the dotted line touching $N$).
Consider the set of $j>0$ such that $P+(j,j)$ is in the segment and $D$ passes trough it (the thick squares).
The polynomial associated to $N$ is $\Gamma_N=\Gamma_{d,i}[u]$, where $d,i$ are the maximal and minimal elements of this set, respectively.
\begin{figure}[H]
\centering
\captionsetup[subfigure]{labelformat=empty}
    \begin{subfigure}[b]{0.3\textwidth}
        \centering
        \includegraphics[width=3.2cm]{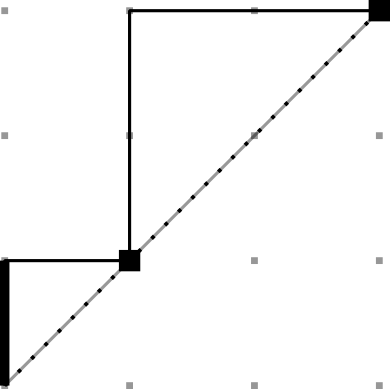}
        \caption{$\Gamma_{3,1}[0]=x_1 + \tfrac{2}{3}x_2 + \tfrac{1}{3}x_3$}
    \end{subfigure}
    \hfill
    \begin{subfigure}[b]{0.3\textwidth}
        \centering
        \includegraphics[width=3.2cm]{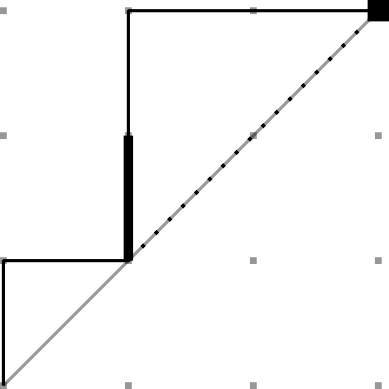}
        \caption{$\Gamma_{2,2}[1]=\tfrac{1}{2}x_2 + x_3$}
    \end{subfigure}
    \hfill
    \begin{subfigure}[b]{0.3\textwidth}
        \centering
        \includegraphics[width=3.2cm]{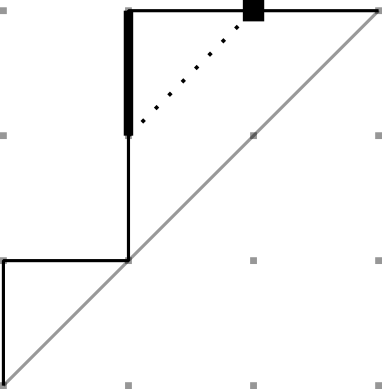}
        \caption{$\Gamma_{1,1}[1]=x_2$}
    \end{subfigure}
\caption{The polynomial of a north step}
\label{fig: gammaN}
\end{figure}
Therefore, 
\begin{equation}
\Gamma_D(x_1,x_2,x_3)=\prod_{N\in D}\Gamma_N(x_1,x_2,x_3)=(x_1 + \tfrac{2}{3}x_2 + \tfrac{1}{3}x_3)(\tfrac{1}{2}x_2 + x_3)(x_2).
\end{equation}
Following the same procedure, in \Cref{fig: gammaDyck3} we have computed the polynomials $\Gamma_D$, for all $3$-Dyck paths $D$.
Then, the volume $\mathcal{V}_3$ of the permutohedron is computed as $\sqrt{4}$ times the sum of all the polynomials $\Gamma_D$.
After factorization, we get
\begin{align*}
\mathcal{V}_3&=\mathrm{Vol}(\mathsf{P}_3(x_1,x_2,x_3))\\
&=\tfrac{1}{3}x_1^3 + 2x_1^2x_2 + 4x_1x_2^2 + \tfrac{4}{3}x_2^3 + 3x_1^2x_3 + 12x_1x_2x_3 + 4x_2^2x_3 + 3x_1x_3^2 + 2x_2x_3^2 + \tfrac{1}{3}x_3^3.
\end{align*}
In general, in \Cref{thm: vol as dyck} (more precisely in \Cref{cor: vol gamma normalized}), we show that
\begin{equation}\label{eq: vol as dyck intro}
\mathcal{V}_n(x_1,\ldots,x_n)=\sqrt{n+1}\,\sum_{D}\Gamma_D(x_1,\ldots,x_n),
\end{equation}
where the sum is over all $n$-Dyck paths.

\begin{figure}[ht]
\centering
\captionsetup[subfigure]{labelformat=empty}
    \begin{subfigure}[b]{0.3\textwidth}
        \centering
        \includegraphics[width=3.2cm]{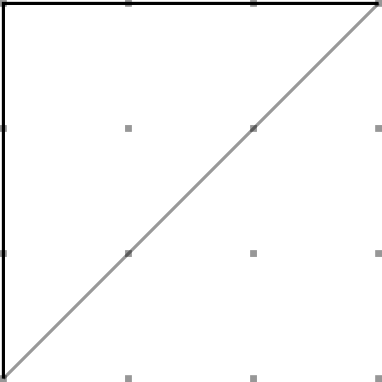}
        \caption{$(\tfrac{1}{3}x_1 + \tfrac{2}{3}x_2 + x_3)(\tfrac{1}{2}x_1 + x_2)(x_1)$}
    \end{subfigure}
    \hfill
    \begin{subfigure}[b]{0.3\textwidth}
        \centering
        \includegraphics[width=3.2cm]{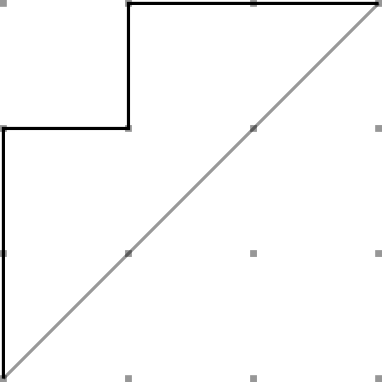}
        \caption{$(\tfrac{1}{3}x_1 + \tfrac{2}{3}x_2 + x_3)(x_1 + \tfrac{1}{2}x_2)(x_2)$}
    \end{subfigure}
    \hfill
    \begin{subfigure}[b]{0.3\textwidth}
        \centering
        \includegraphics[width=3.2cm]{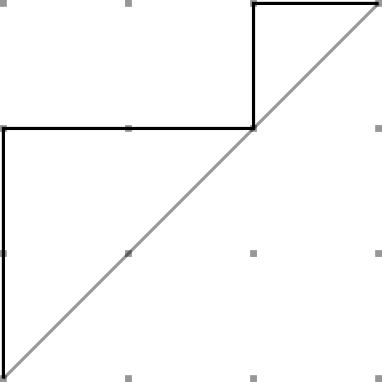}
        \caption{$(x_1 + 2x_2 + x_3)(x_1)(x_3)$}
    \end{subfigure}
    \vfill
    \vspace{2em}
    \begin{subfigure}[b]{0.3\textwidth}
        \centering
        \includegraphics[width=3.2cm]{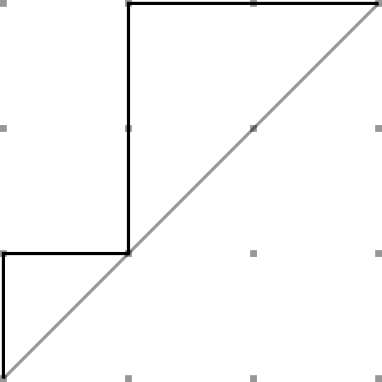}
        \caption{$(x_1 + \tfrac{2}{3}x_2 + \tfrac{1}{3}x_3)(\tfrac{1}{2}x_2 + x_3)(x_2)$}
    \end{subfigure}
    \hspace{2em}
    \begin{subfigure}[b]{0.3\textwidth}
        \centering
        \includegraphics[width=3.2cm]{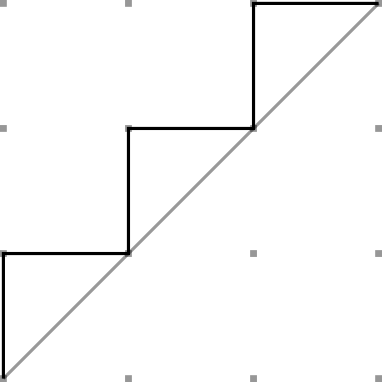}
        \caption{$(x_1 + \tfrac{2}{3}x_2 + \tfrac{1}{3}x_3)(x_2 + \tfrac{1}{2}x_3)(x_3)$}
    \end{subfigure}
\caption{The polynomial of a Dyck path}
\label{fig: gammaDyck3}
\end{figure}

\section{Preliminaries}

\subsection{The root system of type \texorpdfstring{$A_n$}{Lg}}\label{subsec: typeA}
For a more detailed discussion on root systems and Weyl groups, we refer to \cite{Bour46, humphreys1992reflection}.
Consider the Euclidean space $\bbR^{n+1}$ with canonical basis $\varepsilon_1,\ldots,\varepsilon_{n+1}$ and inner product $(-,-)$.
Let $E$ be the hyperplane of vectors whose coordinate sum is zero.
The set
\begin{equation}
    \Phi= \{  \varepsilon_i-\varepsilon_j \in E \mid 1\leq i,j \leq n+1, \, i\neq j \}
\end{equation}
is a root system of type $A_n$.
Its simple roots are the vectors $\alpha_i=\varepsilon_i - \varepsilon_{i+1}$, for $ 1\leq i \leq n$.
We denote by $\Delta\subset\Phi$ the set of simple roots.

The fundamental weights ${\varpi_1,\ldots,\varpi_n\in E}$ are defined by the equations ${(\varpi_i,\alpha_j)=\delta_{ij}}$.
We denote by $\Lambda$ the set of fundamental weights.\footnote{In the literature, $\Lambda$ usually denotes the lattice spanned by the set of fundamental weights and there is no standard symbol to denote this set.}
Both $\Delta$ and $\Lambda$ form a basis of $E$.
We define the dominant region $C^+\subset E$ to be the cone spanned by the fundamental weights
\begin{equation}
    C^+= \{ \lambda \in E \, | \, (\lambda,\alpha_i) \geq 0, \, \mbox{ for all }  1\leq i  \leq n  \},
\end{equation}
and we say that the vector $\lambda\in E$ is dominant if it lies in $C^+$.

Given a root $\alpha$, we denote by $s_\alpha$ the reflection through the hyperplane orthogonal to $\alpha$.
In formulas, $$  s_\alpha(\lambda) = \lambda - (\lambda, \alpha) \alpha. $$
Let  $S=\{s_\alpha\mid \alpha\in\Delta\}$.
The elements of $S$ are called simple reflections.
For $\alpha_i\in\Delta$, we use the notation $s_i=s_{\alpha_i}\in S$.
The subgroup $W$ of the orthogonal transformations of $E$ generated by $S$ is the  Weyl group attached to $\Phi$.
We have a group isomorphism
\begin{align*}
    W&\xlongrightarrow{\sim}\mathfrak{S}_{n+1}\\
    s_i&\mapsto (i,i+1)
\end{align*}
where $\mathfrak{S}_{n+1}$ is the symmetric group.
The pair $(W,S)$ is a Coxeter system of type $A_n$.
For $J\subset S$, we denote by $W_J$ the subgroup of $W$ generated by $J$.
The Coxeter graph of $(W,S)$ is given by
\begin{center}
$\mathsf{G}=$
\begin{dynkinDiagram}[labels={s_1,s_2,s_{n-1},s_n},edge length=1.5cm,root radius=0.09cm]{A}{}
\end{dynkinDiagram}
\end{center}
\begin{definition}\label{def: connected J}
For $J\subset S,$ we denote by $\mathsf{G}_J$ the corresponding induced subgraph of $\mathsf{G}$.
We define
\begin{equation}
\mathsf{CC}(J)=\left\{ \mathsf{Vert}(\mathrm{C})\mid \mathrm{C} \mbox{ is a connected component of } \mathsf{G}_J \right\},
\end{equation}
where $\mathsf{Vert}(\mathrm{C})$ is the vertex set of $\mathrm{C}$.
We say that $J$ is \textit{connected} whenever $\mathsf{G}_J$ is a connected graph, i.e., $\mathsf{CC}(J)=\{J\}$.
\end{definition}

The Cartan matrix $\mathsf{A}_n$ is the $n\times n$ matrix given by $(\mathsf{A}_n)_{ij}=(\alpha_i,\alpha_j)$
\begin{equation*}
\mathsf{A}_n=
    \left(\begin{array}{ccccc}
2 & -1 & 0 & \cdots & 0\\
-1 & 2 & -1 & \ddots & \vdots\\
0 & -1 & 2 & \ddots & 0\\
\vdots & \ddots & \ddots & \ddots & -1\\
0 & \cdots & 0 & -1 & 2\end{array}\right).
\end{equation*}
This matrix is invertible and its inverse is given by (see \cite{CartanInverse})
\begin{equation}\label{eq: inversa cartan}
(\mathsf{A}_n^{-1})_{ij}=(\varpi_i,\varpi_j)=\mathrm{min}\{i,j\}-\dfrac{ij}{n+1}.
\end{equation}

\begin{remark}
We have a bijection between the simple reflections $S$, the simple roots $\Delta$ and the fundamental weights $\Lambda$.
Throughout this paper we will index these four sets with $S$, so given $s\in S$, it corresponds $\alpha_s\in \Delta$ and $\varpi_s\in\Lambda$.
Likewise, for $J\subset S$ we have two corresponding sets $\Delta_J\subset\Delta$ and $\Lambda_J\subset \Lambda$.
\end{remark}

\subsection{Volumes and polytopes}\label{sec: vol conventions}
The Euclidean volume on $\bbR^{n+1}$ is the unique translation invariant measure $\mathrm{Vol}_{n+1}$ scaled so that $\mathrm{Vol}_{n+1}([0,1]^{n+1})=1$, where $[0,1]^{n+1}$ is the unit cube.
It satisfies the property $$|\mathrm{det}(v_1,\ldots,v_{n+1})| = \mathrm{Vol}_{n+1}( \Pi_{v_1,\ldots, v_{n+1}}),$$
where $\Pi_{v_1,\ldots, v_{n+1}}$ is the parallelepiped spanned by the vectors $v_1,\ldots, v_{n+1}\in\bbR^{n+1}$.

For a $k$-dimensional subspace $U$ of $\bbR^{n+1}$, the induced volume $\mathrm{Vol}_U$ is determined by the property $$\mathrm{Vol}_{U}( \Pi_{u_1,\ldots, u_k}) = |\mathrm{det}(u_1,\ldots,u_k,v_{k+1},\ldots,v_{n+1})|,$$
where $\{u_1,\ldots,u_k\}$ is a basis of $U$ and $\{v_{k+1},\ldots,v_{n+1}\}$ an orthonormal basis for the orthogonal complement of $U$ in $\bbR^{n+1}$.
We write $\mathrm{Vol}_k=\mathrm{Vol}_U$, by abuse of notation and we call it the $k$-dimensional volume.

We now recall the classical \textit{pyramid formula} for convex polytopes (see\cite[Lemma 5.1.1]{Schneider_2013}, for example).

\begin{lemma}\label{lem: piramidal politopo}
Let $O\in\mathsf{P}\subset U$ be a convex polytope $\mathsf{P}$ with an interior point $O$, sitting in a $k$-dimensional subspace $U$ of $\bbR^{n+1}$.
Then
\begin{equation}
\mathrm{Vol}_k(\mathsf{P})=\frac{1}{k}\sum_{\mathsf{F}\text{ facet of } \mathsf{P}} d(O,\mathsf{F}) \, \mathrm{Vol}_{k-1}(\mathsf{F}),
\end{equation}
where $d(O,\mathsf{F})$ is the distance from $O$ to the space spanned by $\mathsf{F}$.
\end{lemma}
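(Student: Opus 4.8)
\emph{Proof idea.} The plan is to decompose $\mathsf{P}$ into pyramids with common apex $O$, one over each facet, and then to evaluate the volume of each such pyramid by the elementary cone formula.

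First I would reduce to the case in which $O$ is the origin and $U=\bbR^k$ with its standard Lebesgue measure. Translating by $-O$ changes nothing, and choosing an orthonormal change of coordinates on $U$ (extended by the fixed orthonormal basis of $U^\perp$) turns $\mathrm{Vol}_U$ into the ordinary $k$-dimensional measure, since that extra basis contributes a factor $1$ to the determinant in the definition of $\mathrm{Vol}_U$. So it suffices to show: if $\mathsf{P}\subset\bbR^k$ is a convex polytope containing $0$ in its interior, then $\mathrm{Vol}_k(\mathsf{P})=\tfrac1k\sum_{\mathsf F}d(0,\mathsf F)\,\mathrm{Vol}_{k-1}(\mathsf F)$, the sum running over the facets $\mathsf F$.

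For each facet $\mathsf F$ set $\mathsf P_{\mathsf F}=\conv(\{0\}\cup\mathsf F)$. I claim that $\mathsf P=\bigcup_{\mathsf F}\mathsf P_{\mathsf F}$ and that the sets $\mathsf P_{\mathsf F}$ have pairwise $\mathrm{Vol}_k$-null intersections, whence $\mathrm{Vol}_k(\mathsf P)=\sum_{\mathsf F}\mathrm{Vol}_k(\mathsf P_{\mathsf F})$. Each $\mathsf P_{\mathsf F}\subseteq\mathsf P$ by convexity. Conversely, given $x\in\mathsf P\setminus\{0\}$, since $0$ is interior the ray $\bbR_{\geq0}\,x$ meets $\partial\mathsf P$ at a unique point $\mu x$ with $\mu\geq1$; as the boundary of a polytope is the union of its facets, $\mu x\in\mathsf F$ for some facet $\mathsf F$, and then $x=\tfrac1\mu(\mu x)+(1-\tfrac1\mu)\cdot 0\in\mathsf P_{\mathsf F}$. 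For the overlaps, write $H_{\mathsf F}=\{z:(z,u_{\mathsf F})=h_{\mathsf F}\}$ for the hyperplane spanned by $\mathsf F$, with $u_{\mathsf F}$ a unit outer normal and $h_{\mathsf F}=d(0,\mathsf F)>0$; then $\mathsf P\subseteq\{(z,u_{\mathsf F})\leq h_{\mathsf F}\}$ and $\mathsf P_{\mathsf F}$ is $k$-dimensional with topological boundary a finite union of $(k-1)$-dimensional polytopes, hence $\mathrm{Vol}_k$-null. If a point lay in the interiors of both $\mathsf P_{\mathsf F}$ and $\mathsf P_{\mathsf F'}$ with $\mathsf F\neq\mathsf F'$, the exit point of its ray would lie in the relative interiors of both $\mathsf F$ and $\mathsf F'$, which is impossible. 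So the interiors of the $\mathsf P_{\mathsf F}$ are pairwise disjoint and fill $\mathsf P$ up to a null set, giving the additivity.

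It remains to compute $\mathrm{Vol}_k(\mathsf P_{\mathsf F})$. Decompose $\bbR^k=u_{\mathsf F}^\perp\oplus\bbR u_{\mathsf F}$; since $u_{\mathsf F}$ is a unit vector, Fubini in this direction expresses $\mathrm{Vol}_k$ as the integral over $t$ of the $(k-1)$-volumes of the slices $\mathsf P_{\mathsf F}\cap\{(z,u_{\mathsf F})=t\}$. As $\mathsf P_{\mathsf F}=\{sy:y\in\mathsf F,\ 0\leq s\leq1\}$ and $(sy,u_{\mathsf F})=s\,h_{\mathsf F}$, this slice equals the dilate $\tfrac{t}{h_{\mathsf F}}\mathsf F$ for $t\in[0,h_{\mathsf F}]$ and is empty otherwise, of $(k-1)$-volume $(t/h_{\mathsf F})^{k-1}\mathrm{Vol}_{k-1}(\mathsf F)$. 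Therefore
\[
\mathrm{Vol}_k(\mathsf P_{\mathsf F})=\int_0^{h_{\mathsf F}}\Bigl(\tfrac{t}{h_{\mathsf F}}\Bigr)^{k-1}\mathrm{Vol}_{k-1}(\mathsf F)\,dt=\frac{h_{\mathsf F}}{k}\,\mathrm{Vol}_{k-1}(\mathsf F).
\]
Summing over all facets yields the pyramid formula. The only delicate point is the bookkeeping in the previous paragraph: checking that the pyramids $\mathsf P_{\mathsf F}$ genuinely cover $\mathsf P$ and that their overlaps form a $\mathrm{Vol}_k$-null set. This rests precisely on $O$ being an interior point of $\mathsf P$ and on the boundary of a polytope being exactly the union of its facets; everything else is the elementary cone computation above.
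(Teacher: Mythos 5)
Your proof is correct and follows exactly the route the paper indicates: the paper does not prove \Cref{lem: piramidal politopo} itself but cites Schneider and remarks that it ``is derived by partitioning $\mathsf{P}$ into pyramids from $O$ to its facets,'' which is precisely your decomposition into the cones $\conv(\{0\}\cup\mathsf{F})$ followed by the elementary slicing computation. Nothing to add.
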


The pyramid formula, as the name suggests, is derived by partitioning $\mathsf{P}$ into pyramids from $O$ to its facets.
This formula will be crucial to us when applying it to the permutohedron in \Cref{lem: piramidal}.

\subsection{The permutohedron}
We recollect some known facts about the permutohedron (see \cite{renner2009descent, postnikov2009permutohedra}).
We denote by $\mathbf{0}$ the origin of $E$.

\begin{definition}
For ${\lambda\in E}$ we define the permutohedron $\mathsf{P}_n(\lambda)\subset E$ as the convex hull of the $W$-orbit of $\lambda$
\begin{equation}
\mathsf{P}_n(\lambda)=\mathrm{Conv}(W\cdot\lambda).
\end{equation}
If $\lambda=\mathbf{0}$ then $\mathsf{P}_n(\lambda)=\{\mathbf{0}\}$.
Otherwise, the permutohedron $\mathsf{P}_n(\lambda)$ is full dimensional.
\end{definition}

We first notice that for any $\lambda\in E$, there is only one dominant vector in the orbit $W\cdot\lambda$.
Therefore, in order to describe the face structure of $\mathsf{P}_n(\lambda)$, it suffices to consider $\lambda\in C^+$.
We consider two cases.

Suppose that $\lambda$ is not stabilized by any simple reflection.
For $s\in S$, the affine hyperplane ${\lambda+\langle\Delta\setminus\{\alpha_s\}\rangle}$ is a supporting hyperplane of $\mathsf{P}_n(\lambda)$.
The intersection of this hyperplane with the permutohedron defines a facet
\begin{equation}
\mathsf{F}^s(\lambda)=\mathrm{Conv}( W_{S\setminus\{s\}}\cdot \lambda ).
\end{equation}
Now to any $J\subset S$ we associate the face $\mathsf{F}_J(\lambda)$ of $\mathsf{P}_n$, obtained as the intersection of all the facets $\mathsf{F}^s(\lambda)$ for $s\notin J$.
We can equivalently describe this face as
\begin{equation}\label{eq: face_J W_J}
\mathsf{F}_J(\lambda)=\mathrm{Conv}(W_J\cdot\lambda).
\end{equation}
It is a fact that these faces are all the faces of $\mathsf{P}_n(\lambda)$ containing $\lambda$.
Since $\lambda$ is not stabilized by any $s\in S$, one can prove that the space spanned by $\mathsf{F}_J(\lambda)$ is $|J|$-dimensional.
In this case, we say that the face is \textit{non-degenerate}.
We note that
\begin{equation*}
\mathsf{F}_J(\lambda)=\begin{cases}
    \mathsf{P}_n(\lambda) & \text{ if } J=S,\\
    \mathsf{F}^s(\lambda) & \text{ if } J={S\setminus\{s\}},\\
    \{\lambda\} & \text{ if } J=\emptyset.
\end{cases}
\end{equation*}

For general $\lambda\in C^+$, let us denote by $ Z(\lambda)\subset S$ the subset of simple reflections stabilizing $\lambda$.

\begin{prop}\label{prop: face structure}
Let $\lambda\in C^+$ with $Z(\lambda)=\emptyset$.
Then the $d$-dimensional faces of $\mathsf{P}_n(\lambda)$ are precisely the $W$-orbit of the faces $\mathsf{F}_J(\lambda)$, for all $J\subset S$ with $|J|=d$.
Furthermore, for $J\subset S$ fixed, the $W$-orbit of $\mathsf{F}_J(\lambda)$ is composed of $[W:W_J]$ faces of dimension $|J|$.
\end{prop}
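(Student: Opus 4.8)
The plan is to bootstrap from the description of the faces of $\mathsf{P}_n(\lambda)$ recalled just before the statement: when $Z(\lambda)=\emptyset$, the faces of $\mathsf{P}_n(\lambda)$ that contain $\lambda$ are exactly the $\mathsf{F}_J(\lambda)$ with $J\subseteq S$, and $\mathsf{F}_J(\lambda)$ is non-degenerate of dimension $|J|$. First I would record that $W$ acts on $E$ by linear isometries and permutes the set $W\cdot\lambda$, hence stabilizes $\mathsf{P}_n(\lambda)=\mathrm{Conv}(W\cdot\lambda)$ and acts on its face lattice preserving dimension. In particular, for each $J$, every face in the $W$-orbit of $\mathsf{F}_J(\lambda)$ is again a face of dimension $|J|$.

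Next I would prove that conversely every $d$-dimensional face lies in one of these orbits. Since $Z(\lambda)=\emptyset$, the stabilizer of $\lambda$ in $W$ is trivial: the stabilizer of a point of the closed dominant chamber is the standard parabolic subgroup generated by the simple reflections fixing it, here $W_{Z(\lambda)}=\{\id\}$. Hence $W\cdot\lambda$ has $|W|$ distinct points, all lying on a sphere centered at $\mathbf{0}$, so each of them is a vertex of $\mathsf{P}_n(\lambda)$; conversely every vertex of $\mathrm{Conv}(W\cdot\lambda)$ lies in $W\cdot\lambda$. Now let $\mathsf{F}$ be any nonempty face of $\mathsf{P}_n(\lambda)$. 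Being a face of a polytope, it contains a vertex, say $w\lambda$ with $w\in W$. Then $w^{-1}\mathsf{F}$ is a face of $\mathsf{P}_n(\lambda)$ containing $\lambda$, so $w^{-1}\mathsf{F}=\mathsf{F}_J(\lambda)$ for some $J\subseteq S$, i.e.\ $\mathsf{F}=w\,\mathsf{F}_J(\lambda)$ with $\dim\mathsf{F}=|J|$. This establishes the first assertion.

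For the count of the orbit, I would determine $\mathrm{Stab}_W(\mathsf{F}_J(\lambda))$. The inclusion $W_J\subseteq\mathrm{Stab}_W(\mathsf{F}_J(\lambda))$ is immediate from $\mathsf{F}_J(\lambda)=\mathrm{Conv}(W_J\cdot\lambda)$ together with the fact that $W_J$ permutes $W_J\cdot\lambda$. For the reverse inclusion, if $w\in W$ fixes $\mathsf{F}_J(\lambda)$ setwise, then it permutes the vertex set of $\mathsf{F}_J(\lambda)$, which by the same reasoning as above is exactly $W_J\cdot\lambda$; hence $w\lambda=u\lambda$ for some $u\in W_J$, and triviality of $\mathrm{Stab}_W(\lambda)$ forces $w=u\in W_J$. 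Therefore $\mathrm{Stab}_W(\mathsf{F}_J(\lambda))=W_J$, and the orbit--stabilizer theorem gives $|W\cdot\mathsf{F}_J(\lambda)|=[W:W_J]$, completing the proof.

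The only genuinely delicate ingredient is the classical identification of $\mathrm{Stab}_W(\lambda)$ with the standard parabolic subgroup $W_{Z(\lambda)}$ (which is what makes it trivial here) and, to a lesser extent, the elementary fact that a nonempty face of a polytope contains a vertex; everything else reduces to the previously recalled description of the faces through $\lambda$ and a routine orbit--stabilizer computation. I do not expect any serious obstacle beyond citing the parabolic-stabilizer property correctly.
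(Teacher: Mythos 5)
Your argument is correct and complete. Note, however, that the paper offers no proof of this proposition at all: it is recalled as a known fact with references to Renner and Postnikov, so there is no in-paper argument to compare against. What you have written is the standard derivation from the facts the paper does record just before the statement (the faces through $\lambda$ are exactly the $\mathsf{F}_J(\lambda)$, each of dimension $|J|$): transitivity of $W$ on the vertex set $W\cdot\lambda$ reduces an arbitrary face to one through $\lambda$, and the orbit count follows from $\mathrm{Stab}_W(\mathsf{F}_J(\lambda))=W_J$ via orbit--stabilizer. The two genuinely non-formal inputs are correctly identified and correctly used: that the stabilizer of a point in the closed dominant chamber is the standard parabolic generated by the simple reflections fixing it (so $Z(\lambda)=\emptyset$ forces a trivial stabilizer of $\lambda$), and that all points of $W\cdot\lambda$ (resp.\ $W_J\cdot\lambda$) lie on a sphere about $\mathbf{0}$ and hence are all extreme, so the vertex set of $\mathsf{F}_J(\lambda)$ is exactly $W_J\cdot\lambda$. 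I see no gap; this would serve as a legitimate self-contained proof where the paper instead defers to the literature.
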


On the other hand, if $\lambda$ is stabilized by $Z(\lambda)\neq\emptyset$, then the $W$-orbit of all the $\mathsf{F}_J(\lambda)$ still describes the faces of $\mathsf{P}_n(\lambda)$, however this is no longer a bijection as there are degenerated faces.

\begin{prop}\label{prop: general face structure}
Let $\lambda\in C^+$ with $Z(\lambda)\neq\emptyset$.
Then \Cref{prop: face structure} holds for all $J\subset S$ satisfying that no connected component of $\mathsf{G}_J$ is contained in $\mathsf{G}_{Z(\lambda)}$.
\end{prop}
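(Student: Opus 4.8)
The plan is to reduce the degenerate case to the non-degenerate one by a careful bookkeeping of which faces $\mathsf{F}_J(\lambda)$ actually coincide and which ones are genuinely lower-dimensional. The starting point is the identity $\mathsf{F}_J(\lambda)=\mathrm{Conv}(W_J\cdot\lambda)$ from \eqref{eq: face_J W_J}, which holds for arbitrary $\lambda\in C^+$ and arbitrary $J\subset S$ (this description does not use $Z(\lambda)=\emptyset$). The key geometric fact I would isolate first is: the affine span of $\mathsf{F}_J(\lambda)$ is $\lambda+\langle \alpha_s : s\in J,\ s\notin Z(\lambda)\rangle$, equivalently its dimension equals the number of simple roots $\alpha_s$ with $s\in J$ that do \emph{not} lie in $\langle \Delta_{Z(\lambda)}\rangle$. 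Concretely, $W_J$ acts on $\lambda$ only through the reflections that move $\lambda$, and one checks that $W_J\cdot\lambda = W_{J'}\cdot\lambda$ where $J'$ is obtained from $J$ by deleting every connected component of $\mathsf{G}_J$ entirely contained in $\mathsf{G}_{Z(\lambda)}$. This is because a connected component $C\subset J$ with $C\subseteq Z(\lambda)$ fixes $\lambda$ pointwise, while the remaining components each contain at least one reflection moving $\lambda$; a short argument with the irreducibility of each component and the fact that $\lambda$ dominant implies $(\lambda,\alpha_s)>0$ for $s\notin Z(\lambda)$ shows that $\dim \mathsf{F}_J(\lambda) = |J'|$, with $J'$ the union of the ``surviving'' components.

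With this in hand I would argue as follows. Fix $J\subset S$ such that no connected component of $\mathsf{G}_J$ lies inside $\mathsf{G}_{Z(\lambda)}$; then $J'=J$, so $\mathsf{F}_J(\lambda)$ is non-degenerate of dimension $|J|$. To see that its $W$-orbit consists of exactly $[W:W_J]$ distinct faces, I would show $\mathrm{Stab}_W(\mathsf{F}_J(\lambda)) = W_J$: the containment $W_J\subseteq\mathrm{Stab}$ is clear, and for the reverse one uses that $\mathsf{F}_J(\lambda)$ is the intersection of the facets $\mathsf{F}^s(\lambda)$ for $s\notin J$ (this intersection description must first be re-established in the presence of stabilizers — here one uses that $\lambda$ is dominant so the supporting hyperplanes $\lambda+\langle\Delta\setminus\{\alpha_s\}\rangle$ still cut out facets, even if some other faces degenerate), together with the standard fact that the facet-stabilizers generate $W_{S\setminus\{s\}}$ and $\bigcap_{s\notin J} W_{S\setminus\{s\}} = W_J$ in a Coxeter group. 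Finally, to get that these exhaust the $d$-dimensional faces, I would take an arbitrary face $\mathsf{F}$ of dimension $d$, translate it by $W$ into one containing $\lambda$, write it as $\mathsf{F}_K(\lambda)$ for some $K\subseteq S$ (every face through $\lambda$ is of this form — this part of \Cref{prop: face structure} carries over verbatim since it only concerns the face lattice at the vertex $\lambda$), and then replace $K$ by the associated $K'$ (deleting components inside $Z(\lambda)$) to obtain a representative $J=K'$ with $|J|=\dim\mathsf{F}=d$ and no component inside $\mathsf{G}_{Z(\lambda)}$.

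The main obstacle I anticipate is the dimension computation $\dim\mathsf{F}_J(\lambda)=|J'|$ and, relatedly, showing that $W_J\cdot\lambda = W_{J'}\cdot\lambda$ precisely — i.e. that deleting a component $C\subseteq Z(\lambda)$ changes nothing, while keeping a component that meets the complement of $Z(\lambda)$ genuinely adds a dimension for \emph{each} of its simple roots. The subtlety is that a connected component $C\not\subseteq Z(\lambda)$ may still contain some reflections in $Z(\lambda)$; one must check that $W_C\cdot\lambda$ spans a space of dimension $|C|$ and not less. I would handle this by the following linear-algebra lemma applied component-by-component: if $C$ is connected and $\lambda$ is dominant with $(\lambda,\alpha_s)>0$ for at least one $s\in C$, then the vectors $\{w\lambda-\lambda : w\in W_C\}$ span $\langle\alpha_s:s\in C\rangle$. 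This follows because the span is $W_C$-stable and contained in $\langle \Delta_C\rangle$, which is an irreducible $W_C$-module (as $C$ is connected), and it is nonzero, hence equals all of $\langle\Delta_C\rangle$. Once this lemma is secured, the rest is the bookkeeping sketched above, and the bijectivity claim of \Cref{prop: face structure} is recovered by restricting attention to the $J$'s that are ``$Z(\lambda)$-reduced'' in the stated sense.
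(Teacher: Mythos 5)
The paper gives no proof of this proposition --- it is stated as a recollected fact with references --- so your write-up can only be judged on its own terms. The sound part of your plan is the irreducibility lemma: for a connected $C$ containing some $s$ with $(\lambda,\alpha_s)>0$, the span of $\{w\lambda-\lambda : w\in W_C\}$ is a nonzero $W_C$-submodule of the irreducible reflection module $\langle\Delta_C\rangle$, hence equals it; summed over the surviving components this gives $\dim\mathsf{F}_J(\lambda)=|J'|$, which is the correct non-degeneracy statement. But note that your opening ``key geometric fact'' contradicts it: the affine span of $\mathsf{F}_J(\lambda)$ is \emph{not} $\lambda+\langle\alpha_s : s\in J,\ s\notin Z(\lambda)\rangle$. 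Take $n=2$, $\lambda=\varpi_2$, $J=S$: that formula predicts dimension $1$, while $\mathsf{P}_2(\varpi_2)$ is a $2$-dimensional triangle. Only the later formula $\dim=|J'|$, the one your lemma actually proves, is right; the first two formulations should be deleted.

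The genuine gap is the stabilizer claim $\mathrm{Stab}_W(\mathsf{F}_J(\lambda))=W_J$, which is false under the stated hypothesis, and with it the count $[W:W_J]$. Take $n=3$ and $\lambda=\varpi_1=\tfrac14(3,-1,-1,-1)$, so $Z(\lambda)=\{s_2,s_3\}$ and $\mathsf{P}_3(\varpi_1)$ is a regular tetrahedron. Let $J=\{s_1\}$; its unique component is not contained in $\mathsf{G}_{Z(\lambda)}$, so the hypothesis holds. Then $\mathsf{F}_J(\lambda)$ is the edge with endpoints $\lambda$ and $s_1\lambda$, and $s_3$ fixes both of them (it fixes $\lambda$ and commutes with $s_1$), so $W_{\{s_1,s_3\}}$ stabilizes the edge. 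Its $W$-orbit is the set of all $6=[W:W_{\{s_1,s_3\}}]$ edges of the tetrahedron, not $[W:W_{\{s_1\}}]=12$ faces. The step of your argument that breaks is precisely ``stabilizer of $\bigcap_{s\notin J}\mathsf{F}^s(\lambda)$ equals the intersection of the stabilizers'': an element may preserve the intersection while permuting the (possibly degenerate) faces being intersected, and indeed here $s_3$ sends the facet $\mathsf{F}^{s_3}(\lambda)$ to a different facet that still contains the edge. This is not merely a hole in your proof: the orbit-count assertion of the proposition fails for such $J$, because $\mathrm{Stab}_W(\mathsf{F}_J(\lambda))$ picks up the reflections of $Z(\lambda)$ that are not adjacent to $J$ in $\mathsf{G}$, so the correct count is $[W:\mathrm{Stab}]$ for this larger group. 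The only instance the paper actually uses, $J=S\setminus\{s\}$ in \Cref{lem: piramidal}, survives: there one checks directly that no element outside the maximal parabolic $W_{S\setminus\{s\}}$ can preserve the corresponding facet, so the pyramid formula is unaffected.
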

This extra condition is empty if $Z(\lambda)=\emptyset$.

\section{The result}

\begin{definition}\label{def: vols V_J}
For $\lambda\in C^+$ and $J\subset S$ with $|J|=d$, we define
\begin{equation}
\mathcal{V}_J(\lambda) = \mathrm{Vol}_d\left( \mathsf{F}_J(\lambda) \right)=\mathrm{Vol}_d\left(\mathrm{Conv}(W_J\cdot\lambda)\right),
\end{equation}
with $ \mathsf{F}_J(\lambda)\subset\mathsf{P}_n(\lambda)$.
We write $\mathcal{V}_n=\mathcal{V}_S$ and we set $\mathcal{V}_\emptyset(\lambda)=1$.
\end{definition}

Note that $\mathcal{V}_J(\lambda)=0$ whenever the face $\mathsf{F}_J(\lambda)$ is degenerated.
Our main objective is to compute $\mathcal{V}_n(\lambda)=\mathrm{Vol}_n(\mathsf{P}_n(\lambda))$.
We recall the formula \cite[Lemma 4.5]{GeoFor2023} for $\mathcal{V}_n(\lambda)$, which we prove here for the sake of completeness.

\begin{lemma}\label{lem: piramidal} For every $\lambda\in C^+$, we have
\begin{equation}\label{eq: piramidal}
\mathcal{V}_n(\lambda)=\dfrac{1}{n}\sum_{s\in S}\left[W:W_{S\setminus\{s\}}\right]\,\dfrac{\left(\lambda,\varpi_s\right)}{\left\Vert\varpi_s\right\Vert} \ \mathcal{V}_{S\setminus\{s\}}(\lambda).
\end{equation}
\end{lemma}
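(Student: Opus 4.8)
The plan is to apply the pyramid formula (\Cref{lem: piramidal politopo}) to the permutohedron $\mathsf{P}_n(\lambda)$ sitting inside the $n$-dimensional space $E$, taking the origin $\mathbf{0}$ as the interior point $O$. First I would note that when $\lambda=\mathbf{0}$ both sides of \eqref{eq: piramidal} vanish, so we may assume $\lambda\neq\mathbf{0}$, in which case $\mathsf{P}_n(\lambda)$ is full-dimensional in $E$ and $\mathbf{0}$ lies in its interior (it is the barycenter of the $W$-orbit of $\lambda$, which is $W$-fixed). The facets of $\mathsf{P}_n(\lambda)$ are, by \Cref{prop: general face structure} applied with $d=n-1$, exactly the $W$-translates of the faces $\mathsf{F}^s(\lambda)=\mathsf{F}_{S\setminus\{s\}}(\lambda)$ for $s\in S$ (the condition on connected components is automatically satisfied for cofinite $J=S\setminus\{s\}$ unless $n=1$, which is a trivial separate check). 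For fixed $s$, the $W$-orbit of $\mathsf{F}^s(\lambda)$ consists of $[W:W_{S\setminus\{s\}}]$ facets, all congruent to $\mathsf{F}^s(\lambda)$ since $W$ acts by isometries of $E$; hence each contributes the same $(n-1)$-volume $\mathcal{V}_{S\setminus\{s\}}(\lambda)$.

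The next step is to compute the distance $d(\mathbf{0},\mathsf{F})$ from the origin to the affine span of a facet $\mathsf{F}$. Since all facets in the $W$-orbit of $\mathsf{F}^s(\lambda)$ are obtained by applying isometries fixing $\mathbf{0}$, they are all equidistant from $\mathbf{0}$, so it suffices to compute $d(\mathbf{0},\mathsf{F}^s(\lambda))$. The affine span of $\mathsf{F}^s(\lambda)$ is the supporting hyperplane $\lambda+\langle\Delta\setminus\{\alpha_s\}\rangle = \lambda + \alpha_s^{\perp}\cap E$; equivalently it is the hyperplane in $E$ with unit normal $\varpi_s/\lVert\varpi_s\rVert$ (because $\varpi_s$ is orthogonal to all $\alpha_t$ with $t\neq s$ and lies in $E$). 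Therefore $d(\mathbf{0},\mathsf{F}^s(\lambda))$ is the absolute value of the inner product of $\lambda$ with this unit normal, namely $(\lambda,\varpi_s)/\lVert\varpi_s\rVert$, which is non-negative since $\lambda\in C^+$ forces $(\lambda,\varpi_s)\geq 0$ — here one should note $(\lambda,\varpi_s) = \sum_t (\lambda,\alpha_t)(A_n^{-1})_{ts}\lambda$-coefficients $\geq 0$, or more simply that $\varpi_s$ is a non-negative combination of simple roots.

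Assembling these ingredients, \Cref{lem: piramidal politopo} with $k=n$ gives
\begin{equation*}
\mathcal{V}_n(\lambda) = \frac{1}{n}\sum_{\mathsf{F}\text{ facet}} d(\mathbf{0},\mathsf{F})\,\mathrm{Vol}_{n-1}(\mathsf{F}) = \frac{1}{n}\sum_{s\in S}\;\sum_{\mathsf{F}\in W\cdot\mathsf{F}^s(\lambda)} \frac{(\lambda,\varpi_s)}{\lVert\varpi_s\rVert}\,\mathcal{V}_{S\setminus\{s\}}(\lambda),
\end{equation*}
and since the inner sum has $[W:W_{S\setminus\{s\}}]$ equal terms this collapses to \eqref{eq: piramidal}. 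A small point to address is that distinct $s$ can give overlapping $W$-orbits of facets only if $\mathsf{F}^s(\lambda)=w\cdot\mathsf{F}^t(\lambda)$ for some $w$, $s\neq t$, which cannot happen when $Z(\lambda)=\emptyset$ by \Cref{prop: face structure}, and in the degenerate case the vanishing of $\mathcal{V}_{S\setminus\{s\}}(\lambda)$ for the offending $s$ (recorded right after \Cref{def: vols V_J}) keeps the formula correct — so the identity holds for all $\lambda\in C^+$ as claimed. The main obstacle is bookkeeping rather than depth: one must be careful that the sum over facets is correctly reorganized as a sum over $S$ with the right multiplicities and that the degenerate faces (when $Z(\lambda)\neq\emptyset$) are handled by the convention $\mathcal{V}_J(\lambda)=0$, so that no facet is double-counted and none is omitted.
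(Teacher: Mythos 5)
Your proposal is correct and follows essentially the same route as the paper: apply the pyramid formula of \Cref{lem: piramidal politopo} at the interior point $\mathbf{0}$, group the facets into $W$-orbits of the $\mathsf{F}^s(\lambda)$ of size $[W:W_{S\setminus\{s\}}]$ via \Cref{prop: general face structure}, and compute each distance as $(\lambda,\varpi_s)/\lVert\varpi_s\rVert$ using the unit normal $\varpi_s/\lVert\varpi_s\rVert$ to the supporting hyperplane. The extra care you take with the degenerate case $Z(\lambda)\neq\emptyset$ (handled by the vanishing of $\mathcal{V}_{S\setminus\{s\}}(\lambda)$) is consistent with, and slightly more explicit than, the paper's argument.
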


\begin{proof}
Applying \Cref{lem: piramidal politopo} to $\mathbf{0}\in\mathsf{P}_n(\lambda)\subset E$, we get
\begin{align}
\begin{split}
\mathcal{V}_n(\lambda) &= \frac{1}{n}\sum_{\mathsf{F}\text{ facet of } \mathsf{P}_n(\lambda)} d(\mathbf{0},\mathsf{F}) \, \mathrm{Vol}_{k-1}(\mathsf{F})\\
&= \dfrac{1}{n}\sum_{s\in S}\left[W:W_{S\setminus\{s\}}\right]\,d(\mathbf{0},\mathsf{F}^s(\lambda)) \ \mathcal{V}_{S\setminus\{s\}}(\lambda) \\
&= \dfrac{1}{n}\sum_{s\in S}\left[W:W_{S\setminus\{s\}}\right]\,\dfrac{\left(\lambda,\varpi_s\right)}{\left\Vert\varpi_s\right\Vert} \ \mathcal{V}_{S\setminus\{s\}}(\lambda).
\end{split}
\end{align}
The second equality is derived from \Cref{prop: general face structure}.
The third equality follows from the classical formula for computing the distance from a point to a hyperplane. 
\end{proof}

We derive the formula \eqref{eq: vol as dyck intro} for the volume of the permutohedron in two steps.
We first note that the formula \eqref{eq: piramidal} is not recursive, since we cannot apply it to $J\neq S$.
Our first step is to establish basic properties of the volumes $\mathcal{V}_J$, which will turn \eqref{eq: piramidal} into a recursive formula (\Cref{prop: recurrence}).
This will also imply that these volumes can be seen as polynomials.
Since this step is fairly technical and offers limited insight, we suggest the reader skip ahead to \Cref{prop: recurrence}.
Secondly, we solve this recursive formula giving the resulting polynomial in terms of Dyck paths.

\subsection{Step 1}

For what follows we will perform computations involving varying dimensions.
In order to be precise, we will index everything by $(-)^n$.
For example, in type $A_n$, the Weyl group is $W^n$, the ambient space becomes $E^n$ and the simple roots, fundamental weights and simple reflections will be denoted by $\alpha_i^n,\varpi_i^n,s_i^n$, respectively.
Recall \Cref{def: connected J}.
It is immediate that $J\subset S^n$ is connected if and only if it is of the form
\begin{equation}\label{eq: S shift}
S_d^n[u]=\{s_{1+u}^n,s_{2+u}^n,\ldots,s_{d+u}^n\},
\end{equation}
for some $u,d$ with $u+d\leq n$.
If $u=0$ we put $S_d^n=S_d^n[0]$.
We have $S_n^n=S^n$.

Let us denote by $E^n(X)$ the subspace of $E^n$ with basis $X\subset E^n$.
For $J\subset S^n$, we define the linear map $\pi_J^n:E^n\rightarrow E^n(\Lambda_J^n)$ given by
\begin{equation}
 \pi_J^n(\varpi^n_s)=
\begin{cases}
\varpi_s^n & \mbox{if } s\in J\\
\mathbf{0} & \mbox{otherwise}
\end{cases}
\end{equation}
If $J=S_d^n[u]$, we also define the linear isomorphism $U_J^n:E^n(\Lambda^n_J)\xlongrightarrow{\sim} E^d$ given by
\begin{equation}
U_J^n(\varpi_{i}^n)=\varpi_{i-u}^d \quad \mbox{for all}\quad i\in\{1+u,\ldots,d+u\}.
\end{equation}
In the following lemma we collect some basic properties of the volumes $\mathcal{V}_J$.

\begin{lemma}\label{lem: volgralfacts}
Let $J\subset S^n$ and $\lambda\in (C^+)^n$.
The following equalities hold.
\begin{enumerate}
\item\label{lem: vol J coord} $\mathcal{V}_J(\lambda)=\mathcal{V}_J(\pi_J^n(\lambda))$.
\item\label{lem: vol cc} $\displaystyle \mathcal{V}_J(\lambda)=\prod_{K\in\mathsf{CC}(J)}\mathcal{V}_K(\lambda)$.
\item\label{lem: vol shift} If $J=S_d^n[u]$ is connected, then $\mathcal{V}_J(\lambda)=\mathcal{V}_{d}(U_J^n(\pi_J^n(\lambda)))$.
\end{enumerate}
\end{lemma}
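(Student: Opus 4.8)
Each part is a statement about the volume $\mathcal{V}_J(\lambda) = \mathrm{Vol}_{|J|}(\mathrm{Conv}(W_J\cdot\lambda))$, so the strategy throughout is to identify the affine span of the face $\mathsf{F}_J(\lambda) = \mathrm{Conv}(W_J\cdot\lambda)$, pick a convenient linear chart on it, and compare the induced volume forms via the determinant characterization in \Cref{sec: vol conventions}. I will treat the three items in order, since each builds on the previous.

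\emph{Item \ref{lem: vol J coord}.} The key observation is that $W_J$ fixes pointwise the subspace $E^n(\Lambda_{S^n\setminus J}^n)$ orthogonal-complement-style to $E^n(\Delta_J^n)$: indeed each $s\in J$ acts as $s_{\alpha_s}$, which moves only the $\alpha_s$-direction, and $(\varpi_t,\alpha_s)=0$ for $t\neq s$. Writing $\lambda = \pi_J^n(\lambda) + (\lambda - \pi_J^n(\lambda))$, the second summand lies in $E^n(\Lambda_{S^n\setminus J}^n)$ and is $W_J$-invariant, so $W_J\cdot\lambda$ is the translate by that vector of $W_J\cdot\pi_J^n(\lambda)$. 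Translation preserves volume, giving \ref{lem: vol J coord}. The one thing to check carefully is that the translation vector really is orthogonal to the affine span of $\mathsf{F}_J(\pi_J^n(\lambda))$ — otherwise "translate" is fine but one should still confirm no dimensional collapse; in fact since both faces lie in parallel affine subspaces of the same dimension $|J|$ (by the face-structure discussion before \Cref{prop: face structure}), the induced volumes agree directly.

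\emph{Item \ref{lem: vol cc}.} By \ref{lem: vol J coord} we may assume $\lambda = \pi_J^n(\lambda)$, i.e. $\lambda \in E^n(\Lambda_J^n)$. Write $\mathsf{CC}(J) = \{K_1,\dots,K_r\}$; the simple-root directions $\Delta_{K_1}^n,\dots,\Delta_{K_r}^n$ span \emph{mutually orthogonal} subspaces (distinct components are non-adjacent in $\mathsf{G}$, so the roots are orthogonal), and $W_J = W_{K_1}\times\cdots\times W_{K_r}$ with $W_{K_i}$ acting only in the $i$-th block. Decompose $\lambda = \lambda_1 + \cdots + \lambda_r$ accordingly. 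Then $W_J\cdot\lambda = (W_{K_1}\cdot\lambda_1) + \cdots + (W_{K_r}\cdot\lambda_r)$ as a Minkowski-type sum of sets living in orthogonal subspaces, and $\mathrm{Conv}$ distributes over this, so $\mathsf{F}_J(\lambda)$ is the product polytope $\mathsf{F}_{K_1}(\lambda)\times\cdots\times\mathsf{F}_{K_r}(\lambda)$ sitting in an orthogonal direct sum of subspaces. The induced volume of an orthogonal product of polytopes is the product of the induced volumes — this follows from the determinant characterization by choosing bases block by block, since the Gram matrix is block-diagonal — yielding \ref{lem: vol cc}. I expect \textbf{this to be the main obstacle}: one must be genuinely careful that $\mathrm{Vol}_{|J|}$ on a subspace of $\bbR^{n+1}$, defined via completing to an orthonormal basis of the complement, is multiplicative across an \emph{orthogonal} (not merely complementary) decomposition, and that no cross terms appear — this is exactly where orthogonality of the root subspaces of distinct components is used.

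\emph{Item \ref{lem: vol shift}.} Now $J = S_d^n[u]$ is connected, so by \ref{lem: vol J coord} we reduce to $\lambda\in E^n(\Lambda_J^n)$, and we must compare $\mathcal{V}_J(\lambda)$ with $\mathcal{V}_d$ evaluated at $U_J^n(\lambda)\in E^d$. The isomorphism $U_J^n$ sends $\varpi_i^n \mapsto \varpi_{i-u}^d$, hence (being induced by a matching of simple reflections $s_{i}^n\leftrightarrow s_{i-u}^d$) it intertwines the $W_J$-action on $E^n(\Lambda_J^n)$ with the $W^d$-action on $E^d$; therefore it carries $\mathsf{F}_J(\lambda)$ isometrically-up-to-scalar onto $\mathsf{P}_d(U_J^n(\lambda))$. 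The remaining point is that $U_J^n$ is not just linear but an \emph{isometry} for the relevant inner products: by \eqref{eq: inversa cartan}, $(\varpi_i^n,\varpi_j^n)$ for $i,j\in\{1+u,\dots,d+u\}$ equals $\min\{i,j\}-\tfrac{ij}{n+1}$, whereas $(\varpi_{i-u}^d,\varpi_{j-u}^d) = \min\{i-u,j-u\}-\tfrac{(i-u)(j-u)}{d+1}$ — these are \emph{not} equal in general, so $U_J^n$ is a linear isomorphism but distorts the metric. However, what matters for volume is only the determinant of the Gram matrix; I would argue that $\mathsf{F}_J(\lambda) = \mathrm{Conv}(W_J\cdot\lambda)$ and $\mathsf{P}_d(U_J^n(\lambda))$ have the same $d$-dimensional volume by noting that both equal, up to the same linear change of coordinates sending $\{\alpha_i^n : i\in J\}$ to $\{\alpha_{i-u}^d\}$, the \emph{same} combinatorial polytope — concretely, expressing everything in the simple-root basis (where the $W$-action is by the same permutation matrices in both cases), the two polytopes are literally equal as subsets of $\bbR^d$, and the induced Euclidean volume on $E^n(\Delta_J^n)$ versus on $E^d$ differ only through the Gram determinant of $\{\alpha_i^n : i\in J\} = \{\alpha^d_{i-u}\}$ against the respective ambient spaces — and the Cartan matrix $\mathsf{A}_d$ is the Gram matrix in \emph{both} cases, since $(\alpha_i^n,\alpha_j^n) = (\mathsf{A}_n)_{ij} = (\mathsf{A}_d)_{i-u,j-u}$ for $i,j$ in a length-$d$ connected block. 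Hence the volume forms coincide and \ref{lem: vol shift} follows.
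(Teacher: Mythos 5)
Your overall strategy for Part \ref{lem: vol shift} --- compare the two polytopes through the simple-root span, where the Gram matrix is the Cartan matrix $\mathsf{A}_d$ in both cases, and finish by translation invariance --- is exactly the route the paper takes (Parts \ref{lem: vol J coord} and \ref{lem: vol cc} are left as exercises there, and your arguments for them are essentially right, modulo the imprecision noted below). However, there are two concrete problems in your Part \ref{lem: vol shift}. First, the claim that $U_J^n$ ``intertwines the $W_J$-action on $E^n(\Lambda_J^n)$ with the $W^d$-action on $E^d$'' is false as stated, because $E^n(\Lambda_J^n)$ is not $W_J$-stable: for $\mu\in E^n(\Lambda_J^n)$ one has $s_i^n(\mu)-\mu\in\bbR\alpha_i^n$, and for $i$ at the boundary of the block $J=S_d^n[u]$ the root $\alpha_i^n=-\varpi_{i-1}^n+2\varpi_i^n-\varpi_{i+1}^n$ involves $\varpi_{u}^n$ or $\varpi_{d+u+1}^n$, which lie outside $\Lambda_J^n$. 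So ``$U_J^n$ carries $\mathsf{F}_J(\lambda)$ onto $\mathsf{P}_d(U_J^n(\lambda))$'' does not even typecheck; the map that does the work is $T:E^n(\Delta_J^n)\to E^d$, $\alpha_i^n\mapsto\alpha_{i-u}^d$, applied to the translate $\mathsf{F}_J(\lambda)-\lambda$, whose direction space is $E^n(\Delta_J^n)$, not $E^n(\Lambda_J^n)$.

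Second --- and this is the real gap --- the assertion that in simple-root coordinates ``the two polytopes are literally equal as subsets of $\bbR^d$'' is precisely the nontrivial content of the lemma, and you do not prove it. What must be shown is that for every $w\in W^n_J$,
\[
T\bigl(w(\lambda)-\lambda\bigr)=\varphi(w)\bigl(U_J^n(\pi_J^n(\lambda))\bigr)-U_J^n(\pi_J^n(\lambda)),
\]
where $\varphi(s_i^n)=s_{i-u}^d$. The paper establishes this by induction on $\ell(w)$ (its Claims (a)--(d)), using $s_i^n(\varpi_j^n)=\varpi_j^n-\delta_{ij}\alpha_i^n$ and the fact that $w(\varpi_j^n)-\varpi_j^n$ and $w(\alpha_i^n)$ remain in $E^n(\Delta_J^n)$, on which $T$ does intertwine the actions because the Cartan matrix of the block equals $\mathsf{A}_d$. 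A clean repair of your sketch along the same lines: the orbit-minus-basepoint $\{w(\lambda)-\lambda\}$ is determined, via $s_\alpha(v)=v-(v,\alpha)\alpha$, solely by the numbers $(\lambda,\alpha_i^n)=x_i$ for $i\in J$ and the pairings $(\alpha_i^n,\alpha_j^n)$, and both data are identical for $(\lambda,J)$ and for $(U_J^n\pi_J^n(\lambda),S^d)$; but some such induction is unavoidable. The same confusion between the weight span $E^n(\Lambda_K^n)$ and the direction space $E^n(\Delta_K^n)$ also appears in your Part \ref{lem: vol cc}: the sets $W_{K_i}\cdot\lambda_i$ do not lie in mutually orthogonal subspaces (fundamental weights of distinct components are not orthogonal, by \eqref{eq: inversa cartan}); it is the translates $W_{K_i}\cdot\lambda_i-\lambda_i\subset E^n(\Delta_{K_i}^n)$ that do, and the product-of-volumes argument should be run there.
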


\begin{proof}
Parts \ref{lem: vol J coord} and \ref{lem: vol cc} are straightforward and are left as an exercise to the reader.
We prove Part \ref{lem: vol shift}.
Let $T:E^n(\Delta^n_J)\xlongrightarrow{\sim} E^d$ and $\varphi:W^n_J\xlongrightarrow{\sim} W^d$ be the linear and group isomorphisms given by
\begin{equation}\label{eq: def T phi}
T(\alpha_{i}^n)=\alpha_{i-u}^d \qquad \mbox{and} \qquad \varphi(s_{i}^n)=s_{i-u}^d, \qquad \forall \, i\in\{1+u,\ldots,d+u\}.
\end{equation}
We notice that $T$ also preserves the inner product, as $(\alpha_{i}^n,\alpha_{j}^n)=(\alpha_{i-u}^d,\alpha_{j-u}^d)$ for all $i,j\in\{1+u,\ldots,d+u\}$.
Let $\nu\in  E^n(\Delta^n_J)$, $w\in W^n_J$ and $\mu\in  E^n(\Lambda^n_J)$.
We claim the following:
\raggedcolumns%
\raggedright%
\begin{multicols}{2}
\begin{enumerate}[(a)]
\item\label{claim1} $w(\nu)\in E^n(\Delta^n_J)$.
\item\label{claim2} $T(w(\nu))=\varphi(w)(T(\nu))$. \columnbreak
\item\label{claim3} $w(\mu)-\mu\in E^n(\Delta^n_J)$.
\item\label{claim4} $T(w(\mu)-\mu)=\varphi(w)(U_J^n(\mu))-U_J^n(\mu)$.
\end{enumerate}
\end{multicols}
The claims are clear for $w=\mathrm{id}$.
By linearity, it suffices to consider $\nu=\alpha_{j}^n$ and $\mu=\varpi_{j}^n$.
Let $\ell$ be the length function associated to the Coxeter system $(W^n,S^n)$, and let  $s^n_{i}\in J=S_d^n[u]$ such that $\ell(ws^n_{i})=\ell(w)+1$.
We proceed with the proof of these four claims by induction on $\ell(w)$.
We will repeatedly employ the identity 
\begin{equation}\label{eq: identity fund weights}
s_i^n(\varpi_j^n)=\varpi_j^n-\delta_{ij}\alpha_i^n.
\end{equation}

\begin{enumerate}[(a)]
\item We have that $ws^n_{i}(\alpha_{j}^n)=w(\alpha_{j}^n)-(\alpha_{j}^n,\alpha_{i}^n)\, w(\alpha_{i}^n)$, and Claim \eqref{claim1} follows by induction.
\item By definition \eqref{eq: def T phi}, we have the identity $T(s_i^n(\alpha_j^n)) = s_{i-u}^d(\alpha_{j-u}^d) = \varphi(s_{i}^d)(T(\alpha_j^n))$.
Claim \eqref{claim2} follows by noticing that
\begin{equation}
T(ws^n_{i}(\alpha_{j}^n))=\varphi(w)(T(s^n_{i}(\alpha_{j}^n)))=\varphi(ws_{i}^d)(T(\alpha_{j}^n)).
\end{equation}
\item We have that $w(\varpi_{j}^n) -\varpi_{j}^n\in  E^n(\Delta^n_J)$ by induction and $w(\alpha_{i}^n) \in E^n(\Delta^n_J)$ by \eqref{claim1}.
Therefore, using \eqref{eq: identity fund weights}, Claim \eqref{claim3} follows from the equality
\begin{equation}\label{eq: claim3}
ws^n_{i}(\varpi_{j}^n)-\varpi_{j}^n = w(\varpi_{j}^n) -\varpi_{j}^n -\delta_{ij}\, w(\alpha_{i}^n).
\end{equation}
\item We notice that
\begin{align*}
T(ws^n_{i}(\varpi_{j}^n)-\varpi_{j}^n)
&= T(w(\varpi_{j}^n) -\varpi_{j}^n -\delta_{ij}\, w(\alpha_{i}^n))
& \ \mbox{(\Cref{eq: claim3})} \\
&= T(w(\varpi_{j}^n) -\varpi_{j}^n) -\delta_{ij}\, T(w(\alpha_{i}^n)) &\ \mbox{(Claims \eqref{claim1}, \eqref{claim3})} \\
&= \varphi(w)(\varpi_{j-u}^d) - \varpi_{j-u}^d - \delta_{ij}\varphi(w)(\alpha_{i-u}^d)
& \ \mbox{(Induction, Claim \eqref{claim2})} \\
&= \varphi(w)(s_{j-i}^d(\varpi_{j-u}^d))-\varpi_{j-u}^d
& \ \mbox{(\Cref{eq: identity fund weights})} \\
&= \varphi(ws^n_{i})(U_J^n(\varpi_{j}^n))-U_J^n(\varpi_{j}^n),
\end{align*}
which is what we wanted to prove.
This concludes the proof of the four claims.
\end{enumerate}
Note that Claim \eqref{claim4} implies that applying $T$ to the set $\mathrm{Conv}(W_J^n\cdot\pi_J^n(\lambda))-\pi_J^n(\lambda)$, gives $\mathrm{Conv}(W^d\cdot U_J^n(\pi_J^n(\lambda)))- U_J^n(\pi_J^n(\lambda))$.
Since $T$ is an isometry, it follows that both sets have the same $d$-dimensional volume.
By \Cref{def: vols V_J} and the translation invariance of $\mathrm{Vol}_d$, Part \ref{lem: vol shift} follows.
\end{proof}

As a consequence we can see the volume $\mathcal{V}_n$ as a polynomial, as follows.
Let $i\in\{1,\ldots,n\}$ and consider the polynomial in $\bbR[x_1,x_2,\ldots,x_n]$ given by
\begin{equation}
p_i^n=(\varpi_1^n,\varpi_i^n)x_1+\cdots+(\varpi_n^n,\varpi_i^n)x_n.
\end{equation}
Let $J_i=\{s_1^n,\ldots,\widehat{s_i^n},\ldots,s_n^n\}$.
Then, for every $x_1,\ldots,x_n \geq0$, formula \eqref{eq: piramidal} gives 
\begin{equation}\label{eq: piramidal reescrita}
\mathcal{V}_n(x_1\varpi_1^n+\cdots +x_n\varpi_n^n)=\sum_{i=1}^nr_i \,p_i^n\,\mathcal{V}_{J_i}(x_1\varpi_1^n+\cdots +x_n\varpi_n^n),
\end{equation}
for some $r_i\in \bbR$.
Note that the connected components of $J_i$ are $\{s_1^n,\ldots,s_{i-1}^n\}$ and $\{s_{i+1}^n,\ldots,s_{n}^n\}$.
By Parts \ref{lem: vol cc}, \ref{lem: vol shift} of \Cref{lem: volgralfacts}, we get $\mathcal{V}_{J_i}$ in terms of $\mathcal{V}_{i-1}$ and $\mathcal{V}_{n-i}$, respectively (where we set $\mathcal{V}_0=1$).
Since $p_i^n$ is a homogeneous polynomial of degree $1$, we conclude by induction that $\mathcal{V}_n\in \bbR[x_1,x_2,\ldots,x_n]$ is homogeneous of degree $n$.
Hereafter, we work in the polynomial ring $\bbR[x_1,x_2,\ldots]$.

\begin{notation}
Let $d\in\bbZ_{ > 0}$.
From now on, $\mathcal{V}_d$ denotes the corresponding polynomial in the variables $x_1,\ldots,x_d$, as above.
That is, it is the polynomial defined by
\begin{equation*}
\mathcal{V}_d(x_1,x_2,\ldots)=\mathrm{Vol}_d\left(\mathsf{P}_d\left(x_1\varpi_1^d+\cdots +x_d\varpi_d^d\right)\right), \qquad\forall\, x_1,\ldots,x_d\in\bbR_{\geq0}.
\end{equation*}
We set $\mathcal{V}_0=1$.
\end{notation}

For a polynomial $p$ and a non-negative integer $u$, let $p[u]$ denote the shift of $p$ by $u$
\begin{equation*}
p[u](x_1,x_2,\ldots)=p(x_{1+u},x_{2+u},\ldots).
\end{equation*}

\begin{definition}\label{def: gamma pols}
For $d,i\in\bbZ_{ > 0}$ with $i\leq d$, let $\Gamma'_{d,i}$ be the homogeneous polynomial of degree $1$ defined by
\begin{equation}\label{eq: gamma pols}
\Gamma'_{d,i}(x_1,x_2,\ldots)=\dfrac{1}{d\sqrt{c_{d,i,i}}}\binom{d+1}{i}\sum_{j=1}^dc_{d,i,j}x_i,
\end{equation}
where $c_{d,i,j}=(\mathsf{A}_d^{-1})_{ij}$ (recall \Cref{eq: inversa cartan}).
We refer to the shifts of these polynomials as $\Gamma$-polynomials.
\end{definition}

\begin{remark}
\Cref{def: gamma pols} and the one given in the Introduction \eqref{eq: intro gamma pols} differ by a scalar (hence the notation $\Gamma')$.
\Cref{thm: vol as dyck} and \Cref{cor: vol gamma normalized} illustrate how each definition impacts the formula of the volume of the permutohedron.
\end{remark}

The following proposition rewrites formula \eqref{eq: piramidal} as a polynomial recursive formula.

\begin{prop}\label{prop: recurrence}
For every $n\in\bbZ_{\geq0}$, the following equality holds
\begin{equation}\label{eq: piramidal gamma}
\mathcal{V}_n=\sum_{i=1}^n \Gamma'_{n,i}\mathcal{V}_{i-1}\mathcal{V}_{n-i}[i].
\end{equation}
\end{prop}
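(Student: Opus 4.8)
The plan is to start from the non-recursive pyramid formula \eqref{eq: piramidal reescrita}, namely
\begin{equation*}
\mathcal{V}_n(x_1\varpi_1^n+\cdots+x_n\varpi_n^n)=\sum_{i=1}^n r_i\, p_i^n\, \mathcal{V}_{J_i}(x_1\varpi_1^n+\cdots+x_n\varpi_n^n),
\end{equation*}
where $J_i=S^n\setminus\{s_i^n\}$, and to convert each factor on the right-hand side into the claimed polynomial data by unravelling the definitions. First I would pin down the scalar $r_i$: comparing with \eqref{eq: piramidal}, we have $r_i=\tfrac1n[W:W_{S\setminus\{s_i\}}]/\lVert\varpi_i^n\rVert$, while $p_i^n$ is exactly $(\lambda,\varpi_i^n)$ written in the coordinates $x_j=(\text{coeff of }\varpi_j^n)$, so $r_i p_i^n=\tfrac1n[W:W_{J_i}]\,(\lambda,\varpi_i^n)/\lVert\varpi_i^n\rVert$. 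The index $[W:W_{J_i}]=[\mathfrak{S}_{n+1}:\mathfrak{S}_i\times\mathfrak{S}_{n+1-i}]=\binom{n+1}{i}$, and $\lVert\varpi_i^n\rVert=\sqrt{(\varpi_i^n,\varpi_i^n)}=\sqrt{c_{n,i,i}}$ by \eqref{eq: inversa cartan}. Since $(\lambda,\varpi_i^n)=\sum_{j=1}^n x_j(\varpi_j^n,\varpi_i^n)=\sum_{j=1}^n c_{n,i,j}x_j$, the product $r_i p_i^n$ is precisely $\Gamma'_{n,i}$ of \Cref{def: gamma pols}. This identification is the bookkeeping core of the argument.

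Next I would handle the face volume $\mathcal{V}_{J_i}$. Its Coxeter graph $\mathsf{G}_{J_i}$ has connected components $\{s_1^n,\ldots,s_{i-1}^n\}=S_{i-1}^n$ and $\{s_{i+1}^n,\ldots,s_n^n\}=S_{n-i}^n[i]$, so Part \ref{lem: vol cc} of \Cref{lem: volgralfacts} gives $\mathcal{V}_{J_i}(\lambda)=\mathcal{V}_{S_{i-1}^n}(\lambda)\cdot\mathcal{V}_{S_{n-i}^n[i]}(\lambda)$. Applying Part \ref{lem: vol shift} to each connected piece, $\mathcal{V}_{S_{i-1}^n}(\lambda)=\mathcal{V}_{i-1}(U_{S_{i-1}^n}^n(\pi_{S_{i-1}^n}^n(\lambda)))$ and $\mathcal{V}_{S_{n-i}^n[i]}(\lambda)=\mathcal{V}_{n-i}(U_{S_{n-i}^n[i]}^n(\pi_{S_{n-i}^n[i]}^n(\lambda)))$. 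In the $x$-coordinates, $U^n_{S_{i-1}^n}\circ\pi^n_{S_{i-1}^n}$ simply reads off $(x_1,\ldots,x_{i-1})$ and $U^n_{S_{n-i}^n[i]}\circ\pi^n_{S_{n-i}^n[i]}$ reads off $(x_{i+1},\ldots,x_n)$, which after re-indexing by $-i$ is the variable shift by $i$. Hence $\mathcal{V}_{J_i}$ becomes $\mathcal{V}_{i-1}(x_1,\ldots,x_{i-1})\cdot\mathcal{V}_{n-i}(x_{i+1},\ldots,x_n)=\mathcal{V}_{i-1}\cdot\mathcal{V}_{n-i}[i]$ as polynomials. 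Combining with the previous paragraph yields exactly \eqref{eq: piramidal gamma} for all $x_1,\ldots,x_n\ge 0$; since both sides are polynomials agreeing on the positive orthant, they agree as polynomials, which also dispatches the edge case $n=0$ (empty sum equals $\mathcal{V}_0=1$) and $n=1$.

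The main obstacle, such as it is, lies in keeping the three layers of normalization straight: the index of subgroups giving the binomial, the norm $\lVert\varpi_i^n\rVert$ entering both the pyramid distance and the denominator $\sqrt{c_{d,i,i}}$ of $\Gamma'_{d,i}$, and the identification of the linear maps $U_J^n\circ\pi_J^n$ with coordinate projections plus a shift of variables. None of these is deep, but a sign or index slip propagates. One mild subtlety worth spelling out is that \eqref{eq: piramidal reescrita} is an identity of functions on the positive orthant, whereas \eqref{eq: piramidal gamma} is an identity in $\mathbb{R}[x_1,x_2,\ldots]$; the passage between them is justified because a polynomial vanishing on a nonempty open subset of the orthant is zero, and we have already established that each $\mathcal{V}_d$ is a genuine polynomial. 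I would close by remarking that \eqref{eq: piramidal gamma} is genuinely recursive in $n$, since on the right only $\mathcal{V}_{i-1}$ and $\mathcal{V}_{n-i}$ with $i-1,n-i<n$ appear, setting up Step 2.
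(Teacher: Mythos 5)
Your proposal is correct and follows essentially the same route as the paper: both start from \eqref{eq: piramidal reescrita}, identify $r_i p_i^n$ with $\Gamma'_{n,i}$ via $[W:W_{J_i}]=\binom{n+1}{i}$ and $\lVert\varpi_i^n\rVert=\sqrt{c_{n,i,i}}$, and factor $\mathcal{V}_{J_i}$ into $\mathcal{V}_{i-1}\mathcal{V}_{n-i}[i]$ using Parts \ref{lem: vol cc} and \ref{lem: vol shift} of \Cref{lem: volgralfacts}. You merely spell out more explicitly the details the paper leaves implicit (the coordinate description of $U_J^n\circ\pi_J^n$ and the passage from an identity on the positive orthant to a polynomial identity), which is fine.
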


\begin{proof}
Recall \Cref{eq: piramidal reescrita}.
Note that $W_{J_i}^n\cong W^{i-1}\times W^{n-i}$.
By \Cref{lem: piramidal}, we have
\begin{equation}
r_i=\dfrac{\left[W^n:W^n_{J_i}\right]}{n\sqrt{c_{n,i,i}}}= \dfrac{1}{n\sqrt{c_{n,i,i}}}\binom{n+1}{i},
\end{equation}
since $|W^d|=(d+1)!$.
It follows that
\begin{equation}
\mathcal{V}_n = \sum_{i=1}^n r_i \, p_i^n \, \mathcal{V}_{J_i} = \sum_{i=1}^n \Gamma'_{n,i}\mathcal{V}_{i-1}\mathcal{V}_{n-i}[i].
\end{equation}
\end{proof}

\subsection{Step 2}
From the recurrence \eqref{eq: piramidal gamma}, it is clear that $\mathcal{V}_n$ is a sum of $C_n$ terms, with $C_n$ the $n$-th Catalan number, where each term is a product of $\Gamma$-polynomials.
However it is not clear which $\Gamma$-polynomials appear.
We now precisely answer this question in terms of Dyck paths.
An $n$-Dyck path is a lattice path in $\mathbb{Z}^{2}$ from $(0,0)$ to 
$(n,n)$ with $n$ north steps (along the vector $(0,1)$) and $n$ east-steps (along the vector $(1,0)$), that lies above (but may touch) the diagonal $y=x$.
Let $\mathcal{D}_n$ be the collection of all $n$-Dyck paths.
For $D\in \mathcal{D}_n$, we denote a north step $N$ of $D$ as $N\in D$.

\begin{definition}\label{def: Dyck path poly gamma}
As in the Introduction, to each $D\in\mathcal{D}_n$ we associate a polynomial $\Gamma'_D$ as follows.
Let $D\in\mathcal{D}_n$ and $N\in D$.
Let $P_N=(u,u')$ be the initial point of $N$.
Let $D_N'$ be the sub-path of $D$ from $P_N$ to $(n,n)$, and let $D_N$ be translation of $D_N'$ by $-P_N$.
Let $K$ be the set of integers $1\leq k \leq n-u$ such that $D_N$ passes through $(k,k)$ and the sub-path of $D_N$ from $(0,0)$ to $(k,k)$ is a $k$-Dyck path.
We stress that $K$ is non-empty.
Let $d=\max(K)$ and $i=\min(K)$.
We define $\Gamma'_N= \Gamma'_{d,i}[u]$.
We have associated to each $N\in D$ a $\Gamma$-polynomial.
We define $\Gamma'_D$ as the  homogeneous polynomial of degree $n$ given by
\begin{equation*}
    \Gamma'_D = \prod_{N\in D} \Gamma'_{N}.
\end{equation*}
\end{definition}

\begin{example}
Let us illustrate \Cref{def: Dyck path poly gamma} with an example.
Let $D\in\mathcal{D}_7$ be the Dyck path in \Cref{fig: gammaN D7}, and let $N\in D$ be the north step corresponding to the thick segment.
The initial point of $N$ is at $(1,2)$.
For this step, we have $d=3$, $i=1$ and $u=1$.
Consequently, $\Gamma'_{N} = \Gamma'_{3,1}[1]$.
We leave to the reader to check that 
\begin{equation*}
\Gamma'_D= \Gamma'_{ 7,1 }[0]\,\Gamma'_{6 , 4}[1]\,\Gamma'_{ 3, 1}[1]\,\Gamma'_{ 2, 1}[2]\,\Gamma'_{ 1, 1}[3]\,\Gamma'_{2 , 2}[5]\,\Gamma'_{ 1,1}[5].
\end{equation*}
\begin{figure}[H]
\centering
\includegraphics[width=4cm]{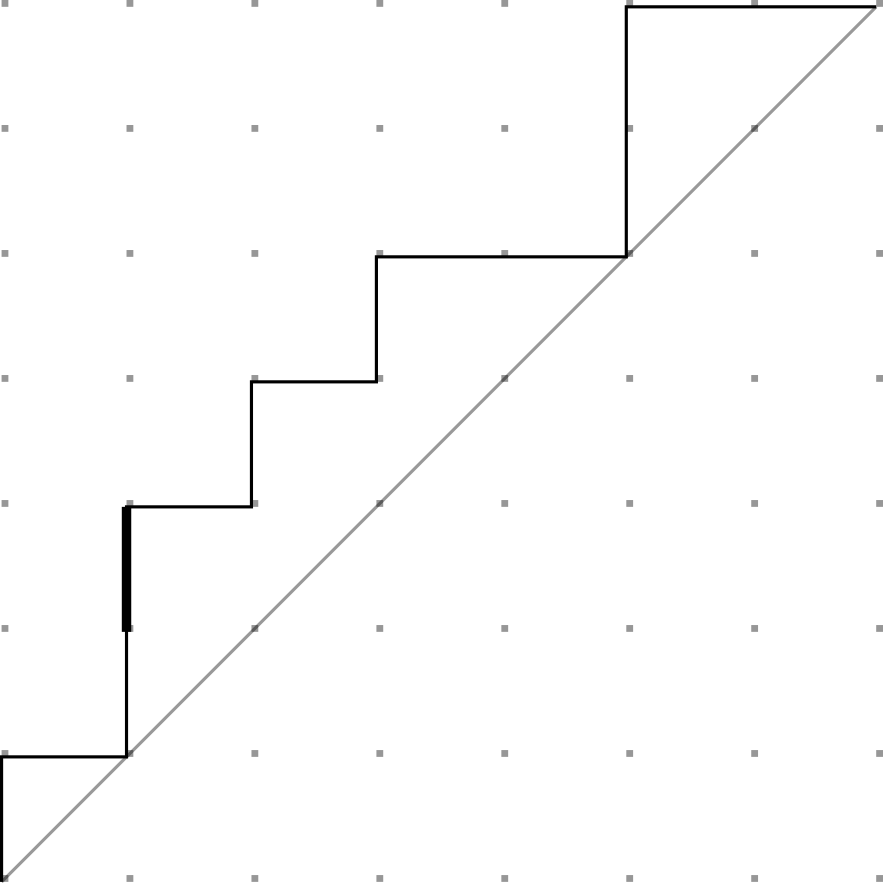}
\caption{A $7$-Dyck path and a north step.}
\label{fig: gammaN D7}
\end{figure}
\end{example}

\begin{theorem}\label{thm: vol as dyck}
For every $n\in\bbZ_{\geq 0}$, the following equality holds
\begin{equation}
\mathcal{V}_n=\sum_{D\in\mathcal{D}_n}\Gamma'_D.
\end{equation}
\end{theorem}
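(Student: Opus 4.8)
The plan is to prove the identity by induction on $n$, matching the combinatorial recursion for Dyck paths against the algebraic recursion for the volume polynomials established in \Cref{prop: recurrence}. The base cases $n=0$ (where both sides equal $1$, the empty product over the empty Dyck path) and $n=1$ are immediate: $\mathcal{D}_1$ has a single path whose unique north step yields $\Gamma'_{1,1}$, and $\Gamma'_{1,1}=x_1$ since $c_{1,1,1}=1/2\cdot 2=1$ and $\binom{2}{1}=2$, wait — one checks directly that $\Gamma'_{1,1}=x_1=\mathcal{V}_1$, consistent with the recurrence $\mathcal{V}_1=\Gamma'_{1,1}\mathcal{V}_0\mathcal{V}_0[1]$.

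For the inductive step, the key is the classical \emph{first-return decomposition} of Dyck paths. Any $D\in\mathcal{D}_n$ with $n\geq 1$ begins with a north step $N_0$ from $(0,0)$ to $(0,1)$, and has a well-defined first return to the diagonal: there is a smallest $i$ with $1\leq i\leq n$ such that $D$ passes through $(i,i)$. This splits $D$ into an "inner" portion strictly between $(0,1)$ and $(i,i-1)$ — which, after translating by $(0,-1)$, is an arbitrary $(i-1)$-Dyck path $D_1\in\mathcal{D}_{i-1}$ — and a "tail" portion from $(i,i)$ to $(n,n)$, which after translating by $(-i,-i)$ is an arbitrary $(n-i)$-Dyck path $D_2\in\mathcal{D}_{n-i}$. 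This gives a bijection $\mathcal{D}_n \;\cong\; \bigsqcup_{i=1}^{n} \mathcal{D}_{i-1}\times\mathcal{D}_{n-i}$, which is exactly the combinatorial shadow of the Catalan recursion driving \eqref{eq: piramidal gamma}.

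The heart of the argument is then to verify that $\Gamma'_D$ factors compatibly with this decomposition, namely that
\begin{equation*}
\Gamma'_D = \Gamma'_{n,i}\cdot \Gamma'_{D_1}\cdot \Gamma'_{D_2}[i],
\end{equation*}
whenever $D$ corresponds to $(i,D_1,D_2)$. There are three things to check. First, for the initial north step $N_0$ of $D$, its initial point is $(0,0)$ so $u=0$, and the set $K$ in \Cref{def: Dyck path poly gamma} consists of those $k$ for which the initial sub-path of $D$ up to $(k,k)$ is a $k$-Dyck path; since $i$ is the \emph{first} return, $i=\min(K)$, and one must argue $n=\max(K)$ (the whole of $D$ is an $n$-Dyck path, and no $k$ with $i<k<n$ that is a return of $D$ can be "bad" — actually any return works, so $\max(K)$ is the last return, which is $n$). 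Hence $\Gamma'_{N_0}=\Gamma'_{n,i}[0]=\Gamma'_{n,i}$. Second, for a north step $N$ lying in the inner portion $D_1$ (before translation it sits between heights, after translating by $(0,-1)$), the initial point, the sub-path $D_N$, and the set $K$ are literally unchanged by the translation by $(0,-1)$ — because that translation moves $D$ but the local data "sub-path from $P_N$ onward, then translate to the origin" is translation-invariant in the relevant coordinate; one must only check that the portion of $D$ beyond $i$ never contributes a new diagonal point to $D_N$ that would enlarge $K$ for an inner step, which follows because an inner north step $N$ has initial point $(a,b)$ with $a<i\leq b$... here one has to be slightly careful and verify that $D_N$ (the tail of $D$ from $P_N$, translated to the origin) restricted to its Dyck-prefixes sees exactly the same $K$ as the corresponding north step of $D_1$. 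Third, for a north step $N$ in the tail $D_2$, its initial point is $(u,u')$ with $u\geq i$; the tail of $D$ from $P_N$ coincides (after translating to origin) with the tail of $D_2$ from the corresponding point, so the $(d,i')$ data agrees and only the shift changes from $u-i$ (within $D_2$) to $u$ (within $D$), i.e. $\Gamma'_N=\Gamma'_{d,i'}[u]=\big(\Gamma'_{d,i'}[u-i]\big)[i]$, which accounts for the $[i]$ shift on $\Gamma'_{D_2}$. Multiplying over all north steps of $D$ gives the claimed factorization.

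Granting the factorization, the induction closes mechanically:
\begin{equation*}
\sum_{D\in\mathcal{D}_n}\Gamma'_D = \sum_{i=1}^{n}\sum_{\substack{D_1\in\mathcal{D}_{i-1}\\ D_2\in\mathcal{D}_{n-i}}}\Gamma'_{n,i}\,\Gamma'_{D_1}\,\Gamma'_{D_2}[i] = \sum_{i=1}^{n}\Gamma'_{n,i}\Big(\sum_{D_1\in\mathcal{D}_{i-1}}\Gamma'_{D_1}\Big)\Big(\sum_{D_2\in\mathcal{D}_{n-i}}\Gamma'_{D_2}\Big)[i] = \sum_{i=1}^{n}\Gamma'_{n,i}\,\mathcal{V}_{i-1}\,\mathcal{V}_{n-i}[i],
\end{equation*}
where the last equality uses the inductive hypothesis for $i-1<n$ and $n-i<n$, and the shift $[i]$ commutes with the sum over $D_2$. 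By \Cref{prop: recurrence} this last expression equals $\mathcal{V}_n$, completing the induction.

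I expect the main obstacle to be the bookkeeping in the second bullet above: making rigorous the claim that for a north step $N$ lying in the inner sub-path, the defining set $K$ (hence the pair $(d,i)$) computed inside $D$ agrees with the one computed inside the translated $(i-1)$-Dyck path $D_1$. The subtlety is that the "tail" $D_N$ used in \Cref{def: Dyck path poly gamma} extends all the way to $(n,n)$ in $D$, whereas inside $D_1$ it would only extend to $(i-1,i-1)$; one must show that no diagonal touchpoint of $D$ with $x>i-1$ (relative to the translated origin at $P_N$) ever yields a valid Dyck-prefix of $D_N$, which is true precisely because such a prefix would have to pass through the point of $D$ at height... i.e. it would force $N$'s "diagonal run" to cross the first-return level $i$, contradicting that $N$ lies strictly inside. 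Once this local-versus-global consistency of $K$ is nailed down — most cleanly by observing that the relevant data only depends on the sub-path of $D$ from $P_N$ up to its \emph{first} return after $P_N$ together with which prefixes are Dyck, and that these are unaffected by translations and by truncating/extending beyond the first return level — the rest is routine.
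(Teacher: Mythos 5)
Your proof is correct and follows essentially the same route as the paper: the first-return decomposition of Dyck paths into $\mathcal{D}_{i-1}\times\mathcal{D}_{n-i}$, the factorization $\Gamma'_D=\Gamma'_{n,i}\,\Gamma'_{D_1}\,\Gamma'_{D_2}[i]$, and closing the induction with \Cref{prop: recurrence}. The bookkeeping you flag for inner north steps does go through (a Dyck-prefix of $D_N$ reaching past the first return would have to pass through $(i,i)$, which lies strictly below the shifted diagonal through $P_N$), and the paper simply asserts the analogous factorization \eqref{eq: gamma decomp in three} without spelling this out.
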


\begin{proof}
We proceed by induction on $n$.
If $n=0$ the result is trivial.
So we fix $n>0$ and assume the result holds for $m<n$.
For $1\leq k \leq n$ we define $\mathcal{D}_n^k$ to be the set of all $D\in \mathcal{D}_n$ such that $D$ passes through $(k,k)$ but $D$ does not pass through $(k',k')$ for $1\leq k'<k$.
To each $D\in\mathcal{D}_n^k$ we associate two Dyck paths $D_b\in\mathcal{D}_{k-1}$ and $D_t\in\mathcal{D}_{n-k}$, as follows:
\begin{itemize}
  \item $D_b$ is the path obtained from the sub-path of $D$ from $(0,1)$ to $(k-1,k)$ by decreasing the $y$ coordinates by $1$.
  \item $D_t$ is the path obtained from the sub-path of $D$ from $(k,k)$ to $(n,n)$ by decreasing the $x$ and $y$ coordinates by $k$.
\end{itemize}
Note that for $D\in \mathcal{D}_n^k$ we have
\begin{equation} \label{eq: gamma decomp in three}
 \Gamma'_{D} = \Gamma'_{n,k}\, \Gamma'_{D_b}\, \Gamma'_{D_t}[k], 
\end{equation} 
where the polynomial $\Gamma'_{n,k}$ comes from the north step from $(0,0)$ to $(0,1)$ eliminated in the construction of $D_b$, and the shift in $\Gamma'_{D_t}[k]$ comes from the decreasing by $k$ in the coordinates in the construction of $D_t$. 

On the other hand, the map $f_k:\mathcal{D}_n^k \rightarrow \mathcal{D}_{k-1} \times \mathcal{D}_{n-k} $ given by $f_k(D) = (D_b,D_t)$ is a bijection.
We get
\begin{align*}
\sum_{D\in\mathcal{D}_n}\Gamma'_D
&=\sum_{k=1}^n \sum_{D\in\mathcal{D}_n^k}\Gamma'_D & \\
&=\sum_{k=1}^n \sum_{D\in\mathcal{D}_n^k} \Gamma'_{n,k}\,\Gamma'_{D_b}\, \Gamma'_{D_t}[k]
& \ \ (\mbox{\Cref{eq: gamma decomp in three}}) \\
&=\sum_{k=1}^n \Gamma'_{n,k} \left(\sum_{D\in\mathcal{D}_{k-1}} \Gamma'_{D} \right) \left(\sum_{D\in\mathcal{D}_{n-k}} \Gamma'_{D} [k] \right)
& \ \ (f_k \mbox{ is a bijection})  \\
&=\sum_{k=1}^n \Gamma'_{n,k} \mathcal{V}_{k-1}\mathcal{V}_{n-k}[k]
& \ \ (\mbox{Induction})  \\
&=\mathcal{V}_{n}.
& \ \ (\mbox{\Cref{prop: recurrence}})
\end{align*}
\end{proof}

By our volume conventions (see \Cref{sec: vol conventions}), dividing $\mathcal{V}_n$ by $\sqrt{n+1}$, gives the volume of the permutohedron normalized so that the volume of the parallelepiped spanned by the simple roots has volume $1$, which is the normalization used in \cite{postnikov2009permutohedra} by Postnikov.
Dividing by this scalar can be realized in another way, as the following corollary explains.
Let $\Gamma_{d,i}=\sqrt{c_{d,i,i}}\,\Gamma'_{d,i}$.
The polynomial $\Gamma_{d,i}[u]$ is the definition \eqref{eq: intro gamma pols} used in the Introduction.
Let $N\in D\in\mathcal{D}_n$ and $\Gamma'_N=\Gamma'_{d,i}[u]$ as in \Cref{def: Dyck path poly gamma}.
We have
\begin{equation*}
\Gamma_N=\sqrt{c_N} \, \Gamma'_N
\qquad \mbox{and} \qquad
\Gamma_D = \sqrt{c_D}\, \Gamma'_D,
\end{equation*}
where $c_N=c_{d,i,i}$ and $c_D=\prod_{N\in D}c_N$.

\begin{cor}\label{cor: vol gamma normalized}
Let $n\in\bbZ_{\geq0}$.
For all $D\in\mathcal{D}_n$, we have $c_D=\dfrac{1}{n+1}$.
Consequently,
\begin{equation}\label{eq: gamma postnikov}
\mathcal{V}_n=\sqrt{n+1}\,\sum_{D\in\mathcal{D}_n}\Gamma_D.
\end{equation}
\end{cor}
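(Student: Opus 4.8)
The plan is to reduce the statement to the single identity $c_D = \frac{1}{n+1}$ for every $D\in\mathcal{D}_n$, since the volume formula \eqref{eq: gamma postnikov} then follows immediately from Theorem \ref{thm: vol as dyck} by writing $\Gamma'_D = \Gamma_D/\sqrt{c_D} = \sqrt{n+1}\,\Gamma_D$ and summing over $\mathcal{D}_n$. So the whole task is to compute the product $c_D = \prod_{N\in D} c_{d(N),i(N),i(N)}$, where for each north step $N$ the pair $(d(N),i(N))$ is the one extracted from the Dyck-path data in \Cref{def: Dyck path poly gamma}, and $c_{d,i,i} = (\mathsf{A}_d^{-1})_{ii} = i - \frac{i^2}{d+1} = \frac{i(d+1-i)}{d+1}$ by \eqref{eq: inversa cartan}.

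The natural approach is induction on $n$, mirroring exactly the decomposition used in the proof of Theorem \ref{thm: vol as dyck}. Fix $n>0$ and assume $c_{D'} = \frac{1}{m+1}$ for every $D'\in\mathcal{D}_m$ with $m<n$. Given $D\in\mathcal{D}_n$, let $k$ be the smallest positive integer with $D$ passing through $(k,k)$, so $D\in\mathcal{D}_n^k$, and let $D_b\in\mathcal{D}_{k-1}$, $D_t\in\mathcal{D}_{n-k}$ be as in that proof. The multiplicative decomposition \eqref{eq: gamma decomp in three} of $\Gamma'_D$ has an exact analogue for the scalars: the north steps of $D$ are partitioned into the single initial step from $(0,0)$ to $(0,1)$ — contributing the factor $c_{n,k,k}$ — the steps coming from $D_b$, and the steps coming from $D_t$ (shifting the variables by $k$ does not change the scalar attached to a north step, since $c_{d,i,i}$ depends only on $d,i$). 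Hence
\begin{equation*}
c_D = c_{n,k,k}\cdot c_{D_b}\cdot c_{D_t} = \frac{k(n+1-k)}{n+1}\cdot\frac{1}{k}\cdot\frac{1}{n-k+1} = \frac{1}{n+1},
\end{equation*}
using the inductive hypothesis on $D_b$ and $D_t$ together with the closed form $c_{n,k,k} = \frac{k(n+1-k)}{n+1}$. This completes the induction.

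I expect the only genuine obstacle to be the bookkeeping in establishing the scalar analogue of \eqref{eq: gamma decomp in three}: one must check that the north step of $D$ from $(0,0)$ to $(0,1)$ really does have associated data $(d,i) = (n,k)$ — i.e. that the set $K$ of \Cref{def: Dyck path poly gamma} for this step has $\max K = n$ (always true, since $D$ reaches $(n,n)$) and $\min K = k$ (true precisely because $k$ is the \emph{first} return of $D$ to the diagonal, so no smaller $(k',k')$ is on $D$) — and that every other north step keeps its $(d,i)$ unchanged under passing to $D_b$ or $D_t$. But all of this is already implicit in the proof of Theorem \ref{thm: vol as dyck}, where the same partition of north steps is used to factor $\Gamma'_D$; the present argument simply extracts the $c$-factor instead of the full linear form. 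Thus the corollary follows with essentially no new work beyond quoting the Cartan-inverse diagonal entry $c_{d,i,i} = \frac{i(d+1-i)}{d+1}$.
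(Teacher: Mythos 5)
Your proposal is correct and matches the paper's own proof essentially step for step: the same induction on $n$, the same decomposition of $D\in\mathcal{D}_n^k$ into the initial north step (contributing $c_{n,k,k}=\tfrac{k(n+1-k)}{n+1}$) together with $D_b$ and $D_t$, and the same telescoping product $c_D=c_{n,k,k}\,c_{D_b}\,c_{D_t}=\tfrac{1}{n+1}$. The only difference is that you spell out the bookkeeping for why the initial step has data $(d,i)=(n,k)$, which the paper leaves implicit via \eqref{eq: gamma decomp in three}.
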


\begin{proof}
We proceed by induction.
The case $n=0$ is clear, so we let $n>0$.
Let $D\in\mathcal{D}_n^k$ and let $N\in D$ with initial point $P_N=(0,0)$.
Note that $c_N=c_{n,k,k}=\dfrac{k}{n+1}(n+1-k)$, by \eqref{eq: inversa cartan}.
Using \eqref{eq: gamma decomp in three} and our inductive hypothesis, we get
\begin{equation*}
c_D = c_N\,c_{D_b}\,c_{D_t} = \dfrac{c_{n,k,k}}{k(n-k+1)} = \dfrac{1}{n+1},
\end{equation*}
which is what we wanted to prove.
\Cref{eq: gamma postnikov} follows immediately from \Cref{thm: vol as dyck}.
\end{proof}

\begin{remark}
Postnikov's formula \cite[Theorem 17.1]{postnikov2009permutohedra} is precisely $\dfrac{\mathcal{V}_n}{\sqrt{n+1}}=\sum_D\Gamma'_D$.
This equivalence can be derived using the following well-known bijection $\psi$ between $n$-Dyck paths and rooted planar binary trees with $n+1$ leaves.
For $D\in\mathcal{D}_n$, we inductively define $\psi(D)$ as the tree whose left and right subtrees (from the root) are $\psi(D_b)$ and $\psi(D_t)$, respectively.
\end{remark}

\subsection*{Acknowledgements}
The author would like to thank F. Castillo, N. Libedinsky and D. Juteau for their helpful comments.
A special thanks to D. Plaza for the insightful discussions.

\bibliography{bibliography}
\bibliographystyle{plain}

\end{document}